 \definecolor{LightGray}{rgb}{0.83, 0.83, 0.83}
\newcommand{\metzler}[1]{\subscr{\lceil#1\rceil}{Mzr}}
 \crefname{remark}{Remark}{Remarks}
 \crefname{problem}{Problem}{Problems}
 \newcommand{\real}{\mathbb{R}}
 \newcommand{\R}{\mathbb{R}}
\newcommand{\until}[1]{\{1,\dots, #1\}}
\newcommand{\fromto}[2]{\{#1,\dots, #2\}}
\newcommand{\norm}[2]{\|#1\|_{#2}}
\newcommand{\WP}[2]{\left\llbracket{#1}, {#2}\right\rrbracket}
\newcommand{\subscr}[2]{#1_{\textup{#2}}}
\newcommand{\setdef}[2]{\{#1 \; | \; #2\}}
\newcommand{\map}[3]{#1: #2 \rightarrow #3}
\newcommand{\realpart}{\operatorname{{Re}}}
\newcommand{\Iinfty}{I_{\infty}}
\newcommand{\sgn}{\operatorname{sign}}
\newcommand{\sign}{\operatorname{sign}}
\newcommand{\subject}{\text{subject to}}
\DeclareSymbolFont{bbold}{U}{bbold}{m}{n}
\DeclareSymbolFontAlphabet{\mathbbold}{bbold}
\newcommand{\vect}[1]{\mathbbold{#1}}
\newcommand{\vectorones}[1][]{\vect{1}_{#1}}
\newcommand{\vectorzeros}[1][]{\vect{0}_{#1}}
\newcommand{\mcP}{\mathcal{P}}
\newcommand{\mcY}{\mathcal{Y}}
\title{The Yakubovich S-Lemma Revisited:\\ Stability and Contractivity in
  Non-Euclidean Norms\thanks{Submitted to the editors DATE.\funding{This
      material was supported in part by AFOSR grant FA9550-22-1-0059.}}}
\author{Anton V. Proskurnikov\thanks{Department of Electronics and
    Telecommunications, Politecnico di Torino, Turin, Italy
    (\email{anton.p.1982@ieee.org}).}  \and Alexander
  Davydov\thanks{Department of Mechanical Engineering and the Center for
    Control, Dynamical Systems, and Computation, University of California,
    Santa Barbara, USA (\email{davydov@ucsb.edu, bullo@ucsb.edu})} \and
  Francesco Bullo\footnotemark[3] }
\begin{document}
\maketitle
\begin{abstract}
 The celebrated S-Lemma was originally proposed to ensure the existence of
 a quadratic Lyapunov function in the Lur'e problem of absolute
 stability. A quadratic Lyapunov function is, however, nothing else than a
 squared Euclidean norm on the state space (that is, a norm induced by an
 inner product). A natural question arises as to whether squared
 \emph{non-Euclidean} norms $V(x)=\|x\|^2$ may serve as Lyapunov functions
 in stability problems. This paper presents a novel non-polynomial S-Lemma
 that leads to constructive criteria for the existence of such functions
 defined by weighted $\ell_p$ norms.  Our generalized S-Lemma leads to new
 absolute stability and absolute contractivity criteria for Lur'e-type
 systems, including, for example, a new simple proof of the Aizerman and
 Kalman conjectures for positive Lur'e systems.
\end{abstract}

\begin{keywords}
  S-Lemma, Contraction, Absolute stability, Positive Systems.
\end{keywords}

\begin{AMS}
34H05, 93C15
\end{AMS}

\section{Introduction}

\begin{figure}
  \centering
  \includegraphics[width=0.5\columnwidth]{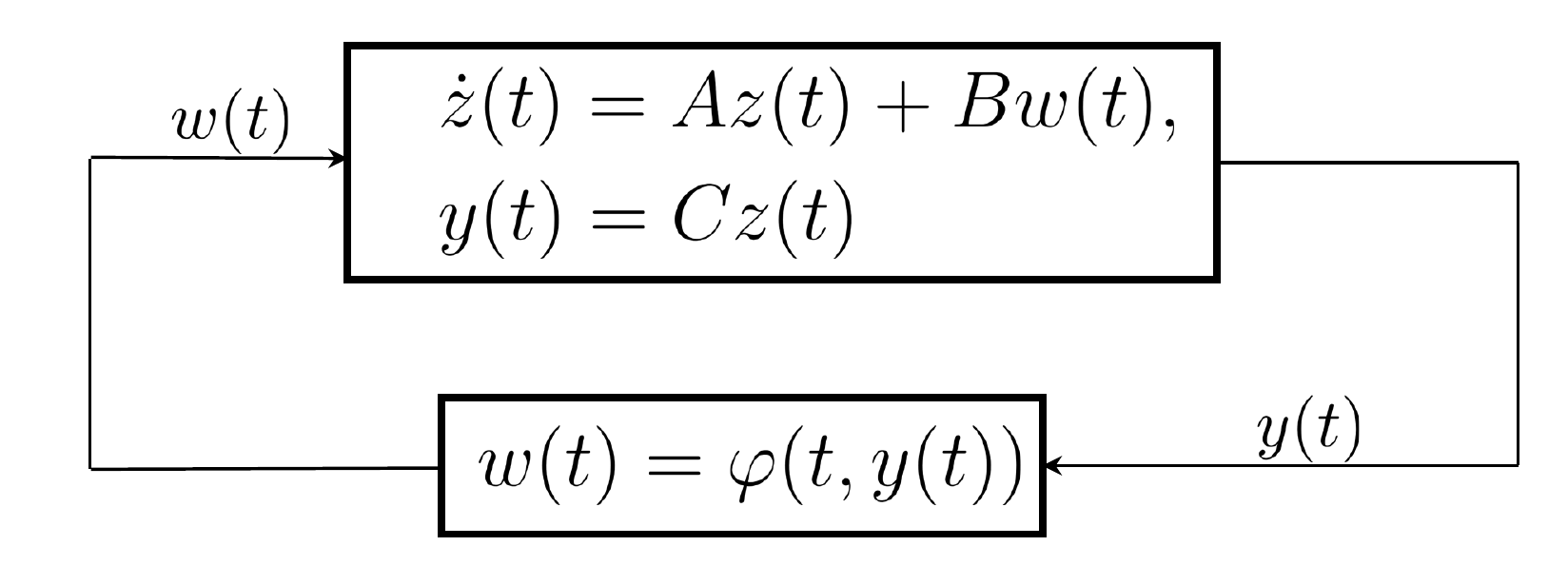}
  \caption{A Lur'e system is the feedback superposition of an LTI system and a nonlinearity}\label{fig.lurie}
\end{figure}
The history of the S-Lemma dates back to early works on stability of
nonlinear control systems with partially uncertain
dynamics~\cite{AIL:57,MAA-FRG:64}.  In many situations, such a system may
be represented in the Lur'e form, that is, as a \emph{feedback
  superposition} of two blocks as shown in Fig.~\ref{fig.lurie}. One block
is a known linear time-invariant system, whereas the other block may be
nonlinear (and is traditionally referred to as the ``nonlinearity'') and
uncertain or have no simple analytic representation as exemplified by
neural network architectures~\cite{MF-MM-GJP:20} and lookup-table
functions.  A prototypical assumption on the uncertain block is that its
input/output behavior satisfies some rough estimates. In the case of a
static nonlinearity, such an estimate often takes the form of the
\emph{sector condition}
\begin{equation}\label{eq.sector-lurie}
\alpha_1\leq\frac{w(t)}{y(t)}\leq\alpha_2\Longleftrightarrow (y(t)-\alpha^{-1}_1w(t))(y(t)-\alpha^{-1}_2w(t))\leq 0,
\end{equation}
where $-\infty\leq\alpha_1<\alpha_2\leq +\infty$. The classical Lur'e
problem~\cite{AIL:57,MRL:06} was to find conditions on the coefficients of
the known LTI block and the sector slopes $\{\alpha_1,\alpha_2\}$ that
ensure global asymptotic stability of the closed-loop system for \emph{all}
nonlinearities in the sector. Later the term \emph{absolute stability} has
been coined for such problems; the term ``absolute'' emphasizes the
applicability of the stability criteria to all unknown systems whose
``nonlinear'' parts belong to a certain class.

Historically, the first approach to absolute stability
theory~\cite{AIL:57,EAB:60} was based on quadratic Lyapunov functions and
their extensions (e.g., a quadratic form plus a definite integral of the
nonlinearity). The validation of the Lyapunov property (the Lyapunov function's derivative
along each trajectory is non-positive) leads to the following problem:
\emph{When is one quadratic inequality (the Lyapunov condition) implied by
  another quadratic inequality (the sector condition)?} More generally, when is a
quadratic inequality implied by a system of quadratic inequalities?

A sufficient condition ensuring such an implication can be obtained by a
simple trick, which was inspired by the idea of Lagrange multipliers and
termed the ``S-method''~\cite{MAA-FRG:64} or the
S-procedure~\cite{AKG-GAL-VAY:04,SVG-ALL:06}. The S-procedure reduces the
problem of a quadratic Lyapunov function existence to an linear matrix
inequality (LMI)-based condition, which can also be transcribed into a
frequency-domain stability criteria by using the Kalman-Yakubovich-Popov
(KYP) lemma. In~\cite{VAY:71}, V.\ A.\ Yakubovich established the seminal
\emph{S-Lemma} showing that, under some conditions, the S-procedure does
not introduce conservatism (i.e., the procedure is ``lossless''). This
result can also be interpreted as a strong duality theorem in a class of
non-convex optimization problems~\cite{VAY:73,VAY:92}. The S-procedure and
S-Lemma are nowadays recognized among the most important tools of modern
nonlinear control theory; their extensions and applications can be found in
the excellent surveys~\cite{SVG-ALL:06,IP-TT:07}.

When the sector inequality~\eqref{eq.sector-lurie} is replaced by the slope
inequality
\begin{equation}\label{eq.slope-lurie}
\alpha_1\leq\frac{w_1(t)-w_2(t)}{y_1(t)-y_2(t)}\leq\alpha_2,
\end{equation}
which is meant to hold for each two input-output pairs $(w_1,y_1)$ and
$(w_2,y_2)$ and all $t$, the S-Lemma often allows us to derive not only the
global asymptotic stability of the equilibrium, but in fact the stronger
property of exponential, or strong \emph{contractivity}. The strong
contractivity property of a dynamical system implies highly ordered
transient and asymptotic behavior, including (1) existence and global
exponential stability of an equilibrium for time-invariant vector fields,
(2) existence and global exponential stability of a limit cycle for
time-varying periodic vector fields, (3) input-to-state stability and
finite system gain for systems subject to state-independent disturbances as
well as robustness with respect to unmodeled dynamics, (4) modularity and
interconnection properties, and more. In other words, establishing the
contractivity property is a worthy goal, independent from stability
analysis alone.  Remarkably, one of the first contractivity criterion based
on the S-procedure was derived by V.\ A.\ Yakubovich~\cite{VAY:64} who
formulated it, however, as a criterion for entrainment (or, using the
terminology adopted in~\cite{VAY:64}, for the existence and stability of
forced periodic solutions). For comprehensive results on contractivity and
incremental stability we refer
to~\cite{WL-JJES:98,AP-AP-NVDW-HN:04,MdB-DF-GR-FS:16,HT-SJC-JJES:21,FB:22-CTDS}
and references therein.

\paragraph{\bf Problem description and motivation}

The classical S-procedure deals with quadratic inequalities, because its
primary goal was to ensure the existence of a \emph{quadratic} Lyapunov
function. A quadratic Lyapunov function $V(z)=z^{\top}Pz$, where
$P=P^{\top}$ is a constant positive definite matrix, is nothing else than the squared
\emph{Euclidean norm} on $\real^n$ (e.g., the case where $P=I_n$
corresponds to $V(z)=\|z\|_2^2$), and all Euclidean norms (norms induced by
inner products) can be represented in such a form.  Stability and
contraction analysis is usually performed by means of quadratic Lyapunov
functions whose existence boils down to feasibility of
LMIs~\cite{SB-LV:04}.

Quadratic forms, however do not exhaust the list of possible Lyapunov functions. The interest for non-Euclidean norms (e.g., $\ell_1$, $\ell_\infty$ and polyhedral norms) is more recent and motivated by classes of network
systems~\cite{FB:22-CTDS}, such as biological transcriptional systems~\citep{GR-MDB-EDS:10a}, Hopfield neural networks~\citep{YF-TGK:96,HQ-JP-ZBX:01,AD-AVP-FB:21k}, chemical reaction networks~\citep{MAAR-DA:16}, traffic networks~\citep{SC-MA:15,GC-EL-KS:15,SC:19}, vehicle platoons~\citep{JM-GR-RS:19}, and coupled oscillators~\citep{GR-MDB-EDS:13,ZA-EDS:14}.

A natural question thus arises as to whether a counterpart of the S-Lemma
exists and allows us to transcribe sector-type and other standard
constraints on the nonlinear blocks into \emph{non-quadratic} Lyapunov
inequalities arising when the Lyapunov function is chosen as
$V(z)=\|z\|^2$, where the norm is non-Euclidean. In this paper, we give an
affirmative answer and establish a \emph{non-polynomial} counterpart of the
S-Lemma, which allows us to obtain sufficient (and, in some situations,
necessary) conditions ensuring the existence of the \emph{non-quadratic}
Lyapunov function $V(z)=\|Rz\|_p^2$, where $R$ is an invertible matrix and
$p\in [1,\infty]$. The theory developed in this paper is based on the
techniques of logarithmic (log) norms and the \emph{weak
  pairings}~\cite{AD-SJ-FB:20o} associated to them. The theory of matrix
logarithmic norms~\cite{GS:06}, or ``matrix measures,'' that has been
extensively used in analysis of nonlinear circuits and systems since
the 1970s~\cite{CAD-HH:72,CAD-MV:1975}.

It should be noted that, { in the context of non-Euclidean
  norms,} the existence of a Lyapunov function $V(z)=\|Rz\|_1^2$ or
$V(z)=\|Rz\|_{\infty}^2$, where $R$ is a diagonal matrix, reduces to
checking the Hurwitz stability of some Metzler matrix~\cite{AD-AVP-FB:21k}
(whose Perron-Frobenius eigenvector determines the weight $R$). As
essentially argued for example by~\citep{AR:15}, from a computational
viewpoint, checking the Hurwitzness of a Metzler matrix is much simpler
problem (e.g., viable also for large scale problems) than solving
LMIs. Regarding algorithmic and computational aspects, \citep{WB:81} and
\citep{PVA:91} analyze and compare efficient numerical algorithms to
compute the Perron eigenvalue and eigenvector of a nonnegative irreducible
matrix.  { Beside these computational simplifications, there
  are additional practical advantages of non-Euclidean $\ell_1/\ell_\infty$ norms: (1) the $\ell_1$ norm
  (respectively, the $\ell_\infty$ norm) is well suited for systems with
  conserved quantities (respectively, systems with translation invariance),
  e.g., see the theory of weakly contracting and monotone systems in~\citep[Chapter~4]{FB:22-CTDS};
  (2) contractivity with respect to non-Euclidean norms ensures robustness
  with respect to edge removals and structural perturbations, e.g., see the
  notion of connective stability in~\cite{DDS:78}; (3) $\ell_\infty$
  contraction mappings are known to converge under the
  fully asynchronous distributed execution~\cite{DPB-JNT:91}; (4) in machine learning, analysis of the adversarial robustness of a neural net (NN) often needs to be performed in a non-Euclidean norm~\cite{BZ-DJ-DH-LW:22,YL-DH:21,SJ-AD-AVP-FB:21f}, because NNs are known to be vulnerable to small (in $\ell_{\infty}$ sense) disturbances.}

\vskip0.1cm
\paragraph{\bf Contributions} The contributions of this work are as follows:
\begin{itemize}
\item We extend the S-Lemma to a special class of non-polynomial functions
  that were introduced by Lumer~\cite{GL:61} and later used in contraction analysis~\cite{AD-SJ-FB:20o}.
\item Using the generalized S-Lemma, we derive novel criteria for absolute
  stability and contractivity of Lur'e systems. In other words, we provide
  a unifying framework for absolute stability and contractivity analysis of
  dynamical systems over normed vector spaces;
\item We demonstrate that our criteria generalize some results available in the literature, for instance, stability and
  contractivity criteria exploring symmetrization~\cite{RD-SD:18} and the
  Aizerman conjecture for positive systems~\cite{MYC:10}.
\end{itemize}
\vskip0.1cm
\paragraph{\bf Structure of the paper} Section~\ref{sec.review} introduces some mathematical concepts to be used in the subsequent sections, in particular, log norms and weak pairings associated to a norm.
Section~\ref{sec.lemma} presents our first main result (Theorem~\ref{lemma:duality}), which we call the non-polynomial S-Lemma. In Section~\ref{sec.lurie}, this result is applied to analysis of Lur'e-type systems and
establish new criteria of absolute stability and absolute contractivity in non-Euclidean norms;
new proofs of the Aizerman and Kalman conjectures for positive Lur'e systems are given.
Section~\ref{sec.concl} concludes the paper.

\section{Technical preliminaries}\label{sec.review}

We start with introducing notation. For $a\in\R$, let $a^+=\max(a,0)$.
Unless otherwise stated, vectors from $\R^n$ are considered as columns.
For two vectors $x,y\in\R^n$, the relations $\leq,\geq,<,>$ are interpreted
elementwise. The same rule applies to (equally dimensioned) matrices:
$A\leq B$ if and only if $a_{ij}\leq b_{ij}\,\forall i,j$.  The symbols
$\prec,\preceq$ apply to \emph{symmetric} matrices: we write $A\prec B$
(respectively, $A\preceq B$) if $B-A$ is positive definite (respectively,
semidefinite).

Given a matrix $A=(a_{ij})$, denote $|A|=(|a_{ij}|)$.
For two identically sized matrices $A=(a_{ij}), B=(b_{ij})$, $A\circ B=(a_{ij}b_{ij})$ denotes their Hadamard (entry-wise) product.

Recall that a matrix is \emph{Metzler} if all its off-diagonal terms are non-negative. Given a matrix $A\in\real^{n\times{n}}$, its \emph{Metzler majorant} $\metzler{A}\in\real^{n\times{n}}$ is defined by
\[
(\metzler{A})_{ij}: =
\begin{cases} a_{ii},\quad & \text{if }i=j \\ |a_{ij}|, \quad &\text{if } i\neq j.
\end{cases}
\]
Obviously, a matrix is Metzler if and only if it coincides with its Metzler majorant.

\subsection{Log norms and weak pairings in normed spaces}
Let $\|\cdot\|$ be a norm on $\R^n$; the same symbol will be used to denote the induced operator norm on
$\real^{n\times{n}}$.  The \emph{log norm} of $A \in \R^{n \times n}$
with respect to $\norm{\cdot}{}$ is
\begin{equation}
  \mu(A):= \lim_{h \to 0^+} \frac{\|I_n + hA\| - 1}{h}.
\end{equation}
The \emph{conic log norm}~\cite{SJ-AD-FB:20r} of $A\in \real^{n\times n}$ is
  \begin{align*}
    \mu^{+}(A) := \lim_{h\to 0^+}\sup_{\substack{x\ge \vectorzeros[n]\\x\ne \vectorzeros[n]}}\frac{\|(I_n + h A)x\|/\|x\| -1}{h}.
  \end{align*}
We refer to~\cite{CAD-HH:72,AD-SJ-FB:20o} and~\cite{SJ-AD-FB:20r} for, the theory of
log norms and conic log norms.

From~\cite{AD-SJ-FB:20o}, a \emph{weak pairing} on $\R^n$ is a map
$\WP{\cdot}{\cdot}: \R^n \times \R^n \to \R$ satisfying
\begin{enumerate}
\item\label{WSIP1}(Subadditivity and continuity of first argument) $\WP{x_1+x_2}{y} \leq \WP{x_1}{y} + \WP{x_2}{y}$, for all $x_1,x_2,y \in \R^n$ and $\WP{\cdot}{\cdot}$ is continuous in its first argument,
\item\label{WSIP3}(Weak homogeneity) $\WP{\alpha x}{y} = \WP{x}{\alpha y} = \alpha\WP{x}{y}$ and $\WP{-x}{-y} = \WP{x}{y}$, for all $x,y \in \R^n, \alpha \geq 0$,
\item\label{WSIP4}(Positive definiteness) $\WP{x}{x} > 0$, for all $x \neq
  \vectorzeros[n],$
\item\label{WSIP5}(Cauchy-Schwarz inequality) $|\WP{x}{y}| \leq \WP{x}{x}^{1/2}\WP{y}{y}^{1/2}$, for all $x, y \in \R^n.$
\end{enumerate}
A weak pairing is \emph{compatible} with a norm $\norm{\cdot}{}$ if
$\WP{x}{x}=\|x\|^2$ for all $x$. Table~\ref{table:equivalences}  contains
weak pairings compatible with every $\ell_p$ norm, $p\in[1,\infty]$~\cite{AD-SJ-FB:20o,SJ-AD-FB:20r}. This
list includes the \emph{sign pairing} for $\ell_1$ norm and the \emph{max
  pairing} for the $\ell_\infty$ norm. Only unweighted $\ell_p$ norms are
included since $\mu_{p,R}(A) = \mu_{p}(RAR^{-1})$ for any $p \in
[1,\infty]$.

\begin{table}\centering
  \normalsize
    \resizebox{1\textwidth}{!}
            {\begin{tabular}{%
	p{0.28\linewidth}%
	p{0.28\linewidth}%
	p{0.4\linewidth}%
      }
      Norm & Weak pairing
      & Log norms and Lumer's equality
      \\
      \hline
      \rowcolor{LightGray}    &&  \\[-2ex]
      \rowcolor{LightGray}
      $ \begin{aligned}
	\norm{x}{2} = \sqrt{x^\top  x}
      \end{aligned}$
      &
      $\begin{aligned}
	\WP{x}{y}_{2} &= x^\top y
      \end{aligned}$
      &
      $\begin{aligned}
	\mu_{2}(A)  &= \tfrac{1}{2}\lambda_{\max}(A + A^\top)
	\\ &= \max_{\|x\|_{2}=1} x^\top Ax
      \end{aligned}
      $
      \\[2ex]
      &&  \\[-2ex]
      $ \begin{aligned}
	&\norm{x}{p} = \Big(\sum_{i} |x_i|^p\Big)^{1/p}\!\!\!,\\
&1<p<\infty
      \end{aligned}$
      &
      $\begin{aligned}
	&\WP{x}{y}_{p} =\\
&\phantom{123}\|y\|_{p}^{2-p}(y \circ |y|^{p-2})^\top x
      \end{aligned}$
      &
      $\begin{aligned}
	\mu_{p}(A) &= \max_{\|x\|_{p} = 1} (x \circ |x|^{p-2})^\top Ax
      \end{aligned}
      $
      \\
      \rowcolor{LightGray}    &&  \\[-2ex]
      \rowcolor{LightGray}
      $ \begin{aligned}
	\norm{x}{1} &= \sum_i |x_i|
      \end{aligned}$
      &
      $\begin{aligned}
	\WP{x}{y}_{1} &= \|y\|_{1}\sign(y)^\top x
      \end{aligned}$
      & $\begin{aligned}
	\mu_{1}(A) &= \max_{j \in \until{n}} \Big(a_{jj} + \sum_{i \neq j} |a_{ij}|\Big) \\
                        &= \sup_{\|x\|_{1} = 1} \sign(x)^\top Ax\\
    \mu_1^+(A)&=\max_{j \in \until{n}} \Big(a_{jj} + \sum_{i \neq j} a_{ij}^+\Big) \\
                        &= \sup_{\|x\|_{1} = 1,x\geq \vectorzeros[n]} \sign(x)^\top Ax\\
      \end{aligned}$
      \\[2ex]
      &&  \\[-2ex]
      $ \begin{aligned}
	\norm{x}{\infty} &= \max_i |x_i|
      \end{aligned}$
      &
      $\begin{aligned}
	\WP{x}{y}_{\infty} &= \max_{i \in I_{\infty}(y)} x_iy_i
      \end{aligned}$
      & $\begin{aligned}
	\mu_{\infty}(A) &= \max_{i \in \until{n}} \Big(a_{ii} + \sum_{j \neq i} |a_{ij}|\Big) \\
                                &=\sup_{\|x\|_{\infty} = 1} \max_{i \in I_{\infty}(x)} (Ax)_ix_i\\
    \mu_{\infty}^+(A) &= \max_{i \in \until{n}} \Big(a_{ii} + \sum_{j \neq i} a_{ij}^+\Big) \\
                                &=\sup_{\|x\|_{\infty} = 1,x\geq \vectorzeros[n]} \max_{i \in I_{\infty}(x)} (Ax)_ix_i
      \end{aligned}$
      \\[2ex] \hline
      && \\[-1ex]
  \end{tabular}}
  \caption{Table of norms, weak pairings, and log norms for 
    $\ell_2$, $\ell_p$ for $p\in{(1,\infty)}$, $\ell_1$, and $\ell_\infty$
    norms.  We adopt the shorthand $\Iinfty(x) =
    \setdef{i\in\until{n}}{|x_i|=\norm{x}{\infty}}$.} \label{table:equivalences}
\end{table}

The pairings in Table~\ref{table:equivalences} additionally satisfy Lumer's
equalities and the curve norm derivative formulas~\cite{AD-SJ-FB:20o,SJ-AD-FB:20r}.  For
all $A \in \real^{n\times n}$, \emph{Lumer's equalities} state that
\begin{gather}
  \mu(A) = \sup_{\|x\| = 1} \WP{Ax}{x} = \sup_{x \neq \vectorzeros[n]} \frac{\WP{Ax}{x}}{\|x\|^2},\label{eq:Lumer}\\
  \mu^+(A) = \sup_{\|x\| = 1,\,x\geq \vectorzeros[n]} \WP{Ax}{x} = \sup_{\substack{x\ge \vectorzeros[n]\\x\ne \vectorzeros[n]}} \frac{\WP{Ax}{x}}{\|x\|^2}\label{eq:Lumer+}
\end{gather}
For Metzler matrices and $\ell_1$ norm, the log-norm and the conic-log norm are, obviously, coincident, furthermore, the closed simplex $\{x:\|x\| = 1,\,x\geq \vectorzeros[n]\}$
in~\eqref{eq:Lumer+} can be replaced by its (relative) interior as shown by the following proposition.

\begin{proposition} \label{prop.M-l1}
For every $n\times n$ Metzler matrix $M$, one has
  \begin{equation}\label{eq.conic-metzler}
    \mu_1(M)=\mu_{1}^+(M)=\sup_{\substack{\norm{x}{1}=1\\x>\vectorzeros[]}}\WP{Mx}{x}_1=\sup_{x>\vectorzeros[n]} \frac{\WP{Mx}{x}}{\|x\|^2}.
  \end{equation}
  \end{proposition}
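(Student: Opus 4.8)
The plan is to reduce the fourfold identity in \eqref{eq.conic-metzler} to a one-line evaluation on the open positive orthant. First observe that for a Metzler matrix $M=(m_{ij})$ the off-diagonal entries satisfy $|m_{ij}|=m_{ij}^{+}=m_{ij}$, so the closed-form expressions for $\mu_1$ and $\mu_1^{+}$ in Table~\ref{table:equivalences} both collapse to $\mu_1(M)=\mu_1^{+}(M)=\max_{j\in\until{n}}\sum_{i=1}^{n}m_{ij}=:\max_j c_j$, where $c_j$ denotes the $j$-th column sum of $M$; this settles the first equality in \eqref{eq.conic-metzler}. Next, the two rightmost suprema coincide: for $x>\vectorzeros[n]$ the vector $x/\|x\|_1$ lies in the open simplex and, by degree-one homogeneity of $\|\cdot\|_1$ together with the weak homogeneity of $\WP{\cdot}{\cdot}_1$, one has $\WP{M(x/\|x\|_1)}{x/\|x\|_1}_1=\WP{Mx}{x}_1/\|x\|_1^{2}$, exactly the passage between the two forms of Lumer's equality \eqref{eq:Lumer+}. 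Hence it only remains to prove $\sup_{x>\vectorzeros[n]}\WP{Mx}{x}_1/\|x\|_1^{2}=\max_j c_j$.

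For this remaining identity I would compute the ratio directly. On the open orthant $\sign(x)=\vectorones[n]$, so the sign-pairing formula gives $\WP{Mx}{x}_1=\|x\|_1\,\vectorones[n]^{\top}Mx$ and hence $\WP{Mx}{x}_1/\|x\|_1^{2}=\vectorones[n]^{\top}Mx/\|x\|_1=\big(\sum_j c_j x_j\big)\big/\big(\sum_j x_j\big)$, which is a convex combination of $c_1,\dots,c_n$ with strictly positive weights and is therefore always at most $\max_j c_j$; this gives one inequality. For the reverse inequality, pick $j^{\star}$ attaining the maximum and take the family $x^{(\delta)}$ with $x^{(\delta)}_{j^{\star}}=1$ and $x^{(\delta)}_{j}=\delta>0$ for $j\neq j^{\star}$: the ratio then equals $\big(c_{j^{\star}}+\delta\sum_{j\neq j^{\star}}c_j\big)\big/\big(1+(n-1)\delta\big)$, which tends to $c_{j^{\star}}=\max_j c_j$ as $\delta\to 0^{+}$, so the supremum over the open orthant is at least $\max_j c_j$. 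Combining the two bounds closes the chain of equalities. As an alternative for the reverse inequality, one can start from \eqref{eq:Lumer+} and approximate an arbitrary $x\ge\vectorzeros[n]$, $x\ne\vectorzeros[n]$, from inside via $x+\varepsilon\vectorones[n]$, using that for Metzler $M$ one has $\vectorones[n]^{\top}Mx\ge\sign(x)^{\top}Mx$, the difference being $\sum_{i:\,x_i=0}\sum_{j:\,x_j>0}m_{ij}x_j\ge 0$.

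I do not expect a genuine obstacle here; the only point that needs a little care is that the weak pairing $\WP{\cdot}{\cdot}_1$ is discontinuous on the boundary of the positive orthant (the $\sign$ function jumps there), so the closed simplex cannot be replaced by its relative interior by a naive continuity argument. The explicit convex-combination formula sidesteps this, and it is precisely the Metzler hypothesis — via $|m_{ij}|=m_{ij}$ — that makes $\mu_1(M)$, $\mu_1^{+}(M)$ and the column-sum maximum $\max_j c_j$ agree, so that the supremum already taken over the interior reaches the log norm.
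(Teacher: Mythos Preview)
Your argument is correct, and it is in fact more elementary than the paper's own proof. The paper first shifts $M$ to make it entrywise nonnegative, and then proves the key inequality $\sup_{\bar\Delta}\WP{Mx}{x}_1\le\sup_{\Delta}\WP{Mx}{x}_1$ by establishing that $x\mapsto\WP{Mx}{x}_1$ is concave on the closed simplex $\bar\Delta$: for any boundary point $x^{0}$ it finds an interior segment $x^{\theta}$ and shows $\liminf_{\theta\to0^{+}}\WP{Mx^{\theta}}{x^{\theta}}_1\ge\WP{Mx^{0}}{x^{0}}_1$. You bypass this entirely by observing that on the open orthant the sign vector is $\vectorones[n]$, so the ratio $\WP{Mx}{x}_1/\|x\|_1^{2}$ collapses to the weighted average $\sum_j c_jx_j/\sum_j x_j$ of the column sums $c_j$, from which both inequalities are immediate. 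Your route avoids the shift to a nonnegative matrix and the concavity lemma, and makes transparent why the Metzler hypothesis is exactly what forces $\mu_1(M)$ to coincide with $\max_j c_j$. The paper's approach, on the other hand, isolates a structural property (lower semicontinuity of the pairing restricted to $\bar\Delta$, via concavity) that does not depend on the explicit column-sum formula and may be of independent interest when one does not have such a closed form.
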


\emph{The curve norm derivative formula} states that for every differentiable $x: {(a,b)} \to \R^n$ and for almost every $t \in
{(a,b)},$ that right upper Dini derivative of $\|x(t)\|^2$ at $t$ is
\begin{equation}
    \label{eq:cndf}
  D^+ \|x(t)\|^2:  = \limsup_{h\to 0+}\frac{\|x(t+h)\|^2-\|x(t)\|^2}{h}=2\WP{\dot{x}(t)}{x(t)}.
\end{equation}
The curve norm formula is typically used to differentiate a Lyapunov function $V(x)=\|x\|^2$ along the system's trajectories, proving stability or contraction~\cite{AD-SJ-FB:20o}. Unless otherwise stated, the curve norm derivative formula is always supposed to hold. As shown in~\cite{AD-SJ-FB:20o}, all WPs from Table~\ref{table:equivalences} enjoy this useful property. For these WPs, a useful equality holds~\cite[Appendix~C]{AD-SJ-FB:20o}: for all $x,y\in\real^n$, and $c\in\real$
\begin{equation}\label{eq:useful-eq}
  \WP{x + cy}{y} = \WP{x}{y} + c\norm{y}{}^2.
\end{equation}

\begin{remark}
Notice that~\eqref{eq:useful-eq} \emph{does not} guarantee that the WP is linear in its first argument, that is,
\begin{equation}\label{eq:linear}
\WP{ax_1+bx_2}{y}=a\WP{x_1}{y}+b\WP{x_2}{y}
\end{equation}
for all vectors $x_1,x_2,y$ and scalars $a,b\in\mathbb{R}$. For instance, $\WP{\cdot}{\cdot}_{\infty}$, 
as can be seen from Table~I, satisfies~\eqref{eq:useful-eq} yet fails to be linear in its first argument.
\end{remark}

\subsection{Non-polynomial 2-forms associated to a weak pairing}

A standard quadratic form on $\real^n$ admits two equivalent representations. On one hand, it can be considered as a
a homogeneous polynomial $q(x_1,\ldots,x_n)=\sum_{i,j}q_{ij}x_ix_j$ of degree $2$ also termed as a \emph{polynomial 2-form}. The term \emph{homogeneous} means that all (non-zero) terms have same degree $d$ (in our situation $d=2$), or, equivalently, for each scalar $\lambda\in\real$ one has $q(\lambda x_1,\ldots,\lambda x_n)=\lambda^dq(x_1,\ldots,x_n)$. On the other hand, the quadratic form can be considered as a function $q(x)=x^{\top}Qx=\WP{Qx}{x}_2$, where $Q$ is a matrix.

Given a weak pairing $\WP{\cdot}{\cdot}$ and a matrix $P$, define the
\emph{non-polynomial 2-form}\footnote{Such functions were first introduced by Lumer~\cite{GL:61} in the special case where $\WP{\cdot}{\cdot}$ is a semi-inner product on a normed space (possibly, infinite-dimensional).}
\begin{equation*}
  p(x)=\WP{P x}{x},\quad x\in\R^n.
\end{equation*}
For the standard $\ell_1$, $\ell_2$, and $\ell_\infty$ norms, we have:
\begin{align*}
  p_1(x)&=\WP{P x}{x}_1= \|x\|_{1}\sign(x)^\top Px,\\
  p_2(x)&=\WP{P x}{x}_2= x^\top Px ,\\
  p_\infty(x)&=\WP{P x}{x}_\infty=  \max_{i \in I_{\infty}(x)} x_i(Px)_i ,\quad \Iinfty(x) =
\setdef{i\in\until{n}}{|x_i|=\norm{x}{\infty}}.
\end{align*}
For brevity, we omit the term ``non-polynomial'' and call $p(x)$ simply 2-form.

\section{Non-polynomial S-Lemma for general normed spaces}\label{sec.lemma}

Consider a WP $\WP{\cdot}{\cdot}$ on $\real^n$ compatible with a norm
$\|\cdot\|$ on vectors and log norm $\mu(\cdot)$ on matrices. We assume the
weak pairing satisfies Lumer's equality and the curve norm derivative
formula. Consider a family of $s+1$ matrices
$P_0,\ldots,P_s\in\real^{n\times n}$ and functions
\begin{equation}
  p_i(x)=\WP{P_ix}{x},\quad i\in\fromto{0}{s}.
\end{equation}
Given a constant $\rho\in\real^s$, we define the primal optimization
problem
\begin{equation}\label{prob:primal}
  \begin{split}
    \sup_{x\in\real^n}& \quad p_0(x)\\
    \subject &\quad \|x\|=1,\enspace p_1(x)\leq\rho_1,\enspace\ldots \enspace, \enspace p_s(x)\leq \rho_s,
  \end{split}
\end{equation}
and the dual optimization problem:
\begin{equation}\label{prob:dual}
  \begin{split}
    \inf_{\tau\in\real^s}& \quad \mu\Big(P_0-\sum_{j=1}^s\tau_jP_j\Big)+\tau^{\top}\rho\\
    \subject &\quad \tau\geq \vectorzeros[s].
  \end{split}
\end{equation}

We note that the primal problem~\eqref{prob:primal} is non-convex and the constraints, in general, are feasible only for sufficiently large $\rho_i$.
By definition, let $-\infty$ be the value of the optimization problem~\eqref{prob:primal} if the constraints are infeasible.
On the other hand,~\eqref{prob:dual} is a convex program with feasible constraints no matter how the norm is chosen.
In the case of $\ell_2$ norm~\eqref{prob:dual} is a standard semidefinite program, whereas in $\ell_1/\ell_{\infty}$ it turns out to be a linear program,
which allows us to solve it efficiently.

\subsection{Non-polynomial S-Lemma: a weak duality result}

The standard relation of \emph{weak duality} (duality without zero-gap
guarantee)~\cite{SB-LV:04} entails that the infimum in~\eqref{prob:primal}
is not less than the supremum in~\eqref{prob:dual}, provided that the WP is linear in the first argument 
{in the sense of~\eqref{eq:linear}.}
In reality, the linearity requirement can be discarded  as shown by
the following lemma, which is a non-polynomial counterpart of the Yakubovich S-Lemma.


\begin{theorem}[Non-polynomial S-Lemma: Weak duality for Non-Euclidean norms]
  \label{lemma:duality}
  Let $\norm{\cdot}{}$ be a norm on $\real^n$ with log norm $\mu(\cdot)$
  and compatible weak pairing $\WP{\cdot}{\cdot}$ satisfying Lumer's
  equality. Given $P_0,\ldots,P_s\in\real^{n\times n}$ and $\rho\in\real^s$
  \begin{enumerate}
  \item\label{l:d:1} if $\WP{\cdot}{\cdot}$ is linear~\eqref{eq:linear} in its first
    argument, then the optimization problem~\eqref{prob:dual} is the
    Lagrangian dual problem to the optimization
    problem~\eqref{prob:primal};
  \item\label{l:d:2} for an arbitrary $\WP{\cdot}{\cdot}$, let $\alpha$ and $\beta$ denote, respectively, the
    supremum in~\eqref{prob:primal} and the infimum
    in~\eqref{prob:dual}. Then $\alpha\leq\beta$.
    \item \label{l:d:3} In particular, the following statement is valid:
    \begin{equation}\label{eq.S-proc-convenient}
    p_0(x)\leq\beta\|x\|^2\quad\forall x\in\R^n: p_1(x)\leq\rho_1\|x\|^2,\ldots,p_s(x)\leq\rho_s\|x\|^2.
    \end{equation}
  \end{enumerate}
\end{theorem}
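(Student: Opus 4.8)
The plan is to prove the three claims in turn, with claim~\eqref{l:d:2} carrying the real content and the other two reducing to short arguments around it. For claim~\eqref{l:d:1}, I would fix $\tau\ge\vectorzeros[s]$ and abbreviate $A_\tau:=P_0-\sum_{j=1}^s\tau_jP_j$. Linearity~\eqref{eq:linear} of $\WP{\cdot}{\cdot}$ in its first argument gives the pointwise identity $\WP{A_\tau x}{x}=p_0(x)-\sum_{j=1}^s\tau_jp_j(x)$, so Lumer's equality~\eqref{eq:Lumer} yields
\[
\mu(A_\tau)+\tau^{\top}\rho=\sup_{\|x\|=1}\Big(p_0(x)-\sum_{j=1}^s\tau_j\big(p_j(x)-\rho_j\big)\Big)=\sup_{\|x\|=1}L(x,\tau),
\]
where $L(x,\tau):=p_0(x)-\sum_{j=1}^s\tau_j(p_j(x)-\rho_j)$ is the Lagrangian of~\eqref{prob:primal} formed by keeping $\|x\|=1$ in the domain and dualizing the constraints $p_j(x)\le\rho_j$ with multipliers $\tau_j\ge0$. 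Hence~\eqref{prob:dual} equals $\inf_{\tau\ge\vectorzeros[s]}\sup_{\|x\|=1}L(x,\tau)$, which is precisely the Lagrangian dual of~\eqref{prob:primal}.

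For claim~\eqref{l:d:2} I would argue directly, without linearity. If the primal constraints are infeasible then $\alpha=-\infty$ and the inequality $\alpha\le\beta$ is trivial, so let $x$ be primal feasible, i.e.\ $\|x\|=1$ and $p_j(x)\le\rho_j$ for all $j$, and fix an arbitrary $\tau\ge\vectorzeros[s]$. The key step is to expand $P_0x=A_\tau x+\sum_{j=1}^s\tau_jP_jx$ and apply subadditivity of $\WP{\cdot}{\cdot}$ in its first argument (iterated over the $s+1$ summands), followed by weak homogeneity of the pairing; this gives
\[
p_0(x)=\WP{P_0x}{x}\le\WP{A_\tau x}{x}+\sum_{j=1}^s\tau_j\WP{P_jx}{x}=\WP{A_\tau x}{x}+\sum_{j=1}^s\tau_jp_j(x).
\]
Then Lumer's equality~\eqref{eq:Lumer} with $\|x\|=1$ bounds $\WP{A_\tau x}{x}\le\mu(A_\tau)$, and $\tau_j\ge0$ together with $p_j(x)\le\rho_j$ bounds $\sum_j\tau_jp_j(x)\le\tau^{\top}\rho$, so $p_0(x)\le\mu(A_\tau)+\tau^{\top}\rho$. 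Taking the infimum over $\tau\ge\vectorzeros[s]$ gives $p_0(x)\le\beta$, and the supremum over primal-feasible $x$ then gives $\alpha\le\beta$.

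Claim~\eqref{l:d:3} follows from claim~\eqref{l:d:2} by homogeneity. Weak homogeneity of $\WP{\cdot}{\cdot}$ --- including the property $\WP{-x}{-y}=\WP{x}{y}$ --- makes each $p_i$ homogeneous of degree two, $p_i(\lambda x)=\lambda^2 p_i(x)$ for every $\lambda\in\R$, and $\|\lambda x\|^2=\lambda^2\|x\|^2$. Hence, given $x\ne\vectorzeros[n]$ with $p_j(x)\le\rho_j\|x\|^2$ for all $j$, the unit vector $x/\|x\|$ satisfies the constraints of~\eqref{prob:primal}, so $p_0(x/\|x\|)\le\alpha\le\beta$; multiplying by $\|x\|^2$ gives $p_0(x)\le\beta\|x\|^2$, while the case $x=\vectorzeros[n]$ is immediate since $p_0(\vectorzeros[n])=0$.

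I expect the only subtle point to be the displayed inequality in claim~\eqref{l:d:2}: under linearity this would be an equality, but without linearity one must apply subadditivity ``in reverse'', i.e.\ expand $P_0x$ rather than $A_\tau x$, so that the constraint slacks $\tau_j(p_j(x)-\rho_j)$ enter with the sign that keeps the weak-duality estimate intact. Everything else is routine manipulation of Lumer's equality and of the sign conditions $\tau\ge\vectorzeros[s]$ and $p_j(x)\le\rho_j$.
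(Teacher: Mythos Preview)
Your proposal is correct and follows essentially the same approach as the paper: the same Lagrangian computation for~\ref{l:d:1} (modulo a sign convention), the same subadditivity/weak-homogeneity/Lumer chain for~\ref{l:d:2}, and the same normalization argument for~\ref{l:d:3}. Your explicit treatment of the infeasible primal case and your closing remark about why subadditivity must be applied to the decomposition of $P_0x$ (not of $A_\tau x$) are welcome clarifications, but otherwise the argument matches the paper's proof step for step.
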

\begin{proof}
  Adopt the short-hand $P(\tau)=P_0-\sum_{i=1}^s\tau_iP_i$.  Following the
  notation from~\cite{VAY:92}, define the Lagrangian function
  $\map{L}{\real^n\times\real^s}{\real}$ by
  \begin{gather*}
    L(x,\tau)=-p_0(x)+\sum_{j=1}^s\tau_jp_j(x)-\sum_{j=1}^s\tau_j\rho_j=-\tau^{\top}\rho-\WP{P(\tau)x}{x}.
  \end{gather*}
  Applying Lumer's equality~\eqref{eq:Lumer}, the Lagrange dual
  function~\cite{SB-LV:04} becomes
  \begin{equation*}
    g(\tau)=\inf_{x\in\real^n, \norm{x}{}=1}L(x,\tau)=
    -\tau^{\top}\rho-\sup_{x\in\real^n, \norm{x}{}=1}\WP{P(\tau)x}{x}=-\tau^{\top}\rho-\mu(P(\tau)).
  \end{equation*}
  The Lagrange dual problem to~\eqref{prob:primal} is written as
  \begin{equation}\label{eq.dual-}
      \sup_{\tau\in\real^s}\;\;g(\tau)\;\;
      \subject \;\;\tau_1,\ldots,\tau_s\geq 0,
  \end{equation}
  or, equivalently, as problem~\eqref{prob:dual}.  This concludes the proof
  of statement~\ref{l:d:1}.

   For each vector $x\in\real^n$ satisfying constraints~\eqref{prob:primal} and every $\tau\geq\vectorzeros[s]$ one has
  \begin{align}
    \WP{P_0x}{x}&=\WP{P_0x-\sum_{i=1}^s\tau_iP_ix+\sum_{i=1}^s\tau_iP_ix}{x} \nonumber \\
    &\leq
    \WP{P_0x-\sum_{i=1}^s\tau_iP_ix}{x} +    \sum_{i=1}^s\tau_i  \WP{P_ix}{x}
    && \text{(by subadditivity of $\WP{\cdot}{\cdot}$)}
    \nonumber \\
    &\leq \WP{\left(P_0-\sum_{i=1}^s\tau_iP_i\right)x}{x} + \sum_{i=1}^s\tau_i\WP{P_ix}{x}
    && \text{(by weak homogeneity of $\WP{\cdot}{\cdot}$)}
    \nonumber \\
    & \leq \mu(P(\tau))+\sum_{i=1}^s\tau_i\rho_i. \label{eq.duality}
    && \text{(by Lumer's equality for $\WP{\cdot}{\cdot}$)}
  \end{align}
  Taking the supremum of $\WP{P_0x}{x}$ over all feasible $x$ and the infimum of the right-hand side over all $\tau\geq\vectorzeros[s]$, one proves that $\alpha\leq\beta$. This concludes
  the proof of statement~\ref{l:d:2}.

  Statement~\ref{l:d:3} is straightforward from~\ref{l:d:2} by noticing that $p_0(x)\leq\alpha\|x\|^2$ for all $x\in\R^n$ satisfying the inequalities $p_i(x)\leq\rho_i\|x\|^2$:
  this statement is obvious for $x=0$, otherwise, the normalized vector $\tilde x=x/\|x\|$ obeys the constraints in~\eqref{prob:primal} and thus $p_0(x)=\|x\|^2p_0(\tilde x)\leq\alpha\|x\|^2$ due to the definition of $\alpha$.
\end{proof}

\begin{remark}[Conic constraints and more]\label{rem.positive}
  Theorem~\ref{lemma:duality} can be extended with a
  trivial modification to the optimization problems with an additional
  conic constraint $x\geq \vectorzeros[s]$. In this case, the primal
  problem is
  \begin{equation}\label{prob:primal+} 
    \begin{split}
      \sup_{x\in\real^n}& \quad p_0(x) \\
      \subject &\quad \|x\|=1, x\geq \vectorzeros[n] , \enspace p_1(x)\leq\rho_1,\enspace\cdots \enspace, \enspace p_s(x)\leq \rho_s,
    \end{split}
  \end{equation}
  and the usual log norm in the dual problem~\eqref{prob:dual} is replaced
  by the conic log norm:
  \begin{equation}\label{prob:dual+} 
  \begin{split}
    \inf_{\tau\in\real^s}& \quad \mu^+\Big(P_0-\sum_{j=1}^s\tau_jP_j\Big)+\tau^{\top}\rho\\
    \subject &\quad \tau\geq \vectorzeros[s].
  \end{split}
\end{equation}
Denoting the latter infimum by $\beta^+$, the inequality~\eqref{eq.S-proc-convenient} should be rewritten as
\begin{equation}\label{eq.S-proc-convenient+}
    p_0(x)\leq\beta^+\|x\|^2\quad\forall x\in\R^n_{\geq 0}: p_1(x)\leq\rho_1\|x\|^2,\ldots,p_s(x)\leq\rho_s\|x\|^2.
    \end{equation}

\end{remark}

\begin{remark}[Equivalent primal constraints]\label{rem.0}
  From~\eqref{eq:useful-eq}, we know $\WP{y-\rho
    x}{x}=\WP{y}{x}-\rho\norm{x}{}^2$ for each $\rho\in\real$. This
  equality allows us to simplify the constraints in~\eqref{prob:primal} by
  replacing $P_j\mapsto P_j-\rho_j I_j$ and $\rho_j\mapsto 0$. However, sometimes (see below the case of positive systems) it is more
  convenient to consider the general situation.
\end{remark}

\begin{remark}[No constraints implies no gap]\label{rem:no-constraints=no-gap}
  If the primal problem is unconstrained, that is, $s=0$, then
  $\alpha=\beta=\mu(P_0)$ because of Lumer's equality~\eqref{eq:Lumer}. We study two other
  special cases with zero-gap in Subsections~\ref{subsec.l2} and~\ref{subsec.l2}.
  In general, the \emph{duality gap} the optimal values exists: $\alpha<\beta$ (see the examples below).
\end{remark}

\subsection{The classic S-Lemma: the case of Euclidean norm $\ell_2$}\label{subsec.l2}

The first case where there is no gap is where the primal problem has only one constraint ($s=1$) and the norm is Euclidean (i.e., induced by the inner product).
The following result is equivalent to~\cite[Theorem~5.17]{IP-TT:07} and to the Yakubovich S-Lemma~\cite{VAY:73}. Without loss of generality
(Remark~\ref{rem.0}), let $\rho_1=0$.

\begin{lemma}[The Yakubovich S-Lemma]\label{lem.yakub}
  Suppose that $\WP{\cdot}{\cdot}$ is an inner product, $s=1$, and the
  primal constraint reads $\WP{P_1x}{x}\leq 0$. Suppose also that for some $x$ the latter inequality is strict:
  $\WP{P_1x}{x}<0$. Then
  \begin{enumerate}
  \item the maximum and minimum points of  primal and dual problems exist, and there is no duality gap: $\alpha=\beta$ or, equivalently
    \begin{equation}
      \max\setdef{\WP{P_0x}{x}}{\|x\|=1,\,\WP{P_1x}{x}\leq
        0}=\min\setdef{\mu( P_0 - \tau P_1)}{\tau\geq 0},
    \end{equation}
  \item if $x_*$ is a maximizer (generally, non-unique)
    in~\eqref{prob:primal} and $\tau_*$ is a minimizer
    in~\eqref{prob:dual}, then the complementarity condition holds
    \[
    \tau_*\WP{P_1x_*}{x_*}=0.
    \]
  \end{enumerate}
\end{lemma}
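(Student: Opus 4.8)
The plan is to bring the statement, via Remark~\ref{rem.0} and a change of coordinates, to the classical \emph{homogeneous} S-Lemma, establish the zero gap by the hidden-convexity (Dines) argument, and then read off the complementarity condition directly from the weak-duality chain~\eqref{eq.duality} already used in the proof of Theorem~\ref{lemma:duality}.

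\emph{Reductions and existence of a primal maximizer.} By Remark~\ref{rem.0} I may take $\rho_1=0$. Since $\WP{\cdot}{\cdot}$ is an inner product, $\WP{x}{y}=x^\top Gy$ for some $G\succ0$; using the conjugation identity $\mu_{2,R}(A)=\mu_2(RAR^{-1})$ recalled before Table~\ref{table:equivalences} with $R=G^{1/2}$, the substitution $x\mapsto G^{1/2}x$ reduces everything to the standard inner product, so I assume $\WP{P_ix}{x}=x^\top P_ix$. As $\mu_2(A)=\tfrac12\lambda_{\max}(A+A^\top)$ depends only on the symmetric part of $A$, I may further assume $P_0,P_1$ symmetric, whence $\mu_2=\lambda_{\max}$. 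The set $S=\{x:\|x\|=1,\ x^\top P_1x\le0\}$ is closed, hence compact, and nonempty (normalize the Slater point $\bar x$ with $\bar x^\top P_1\bar x<0$), so $\alpha$ is finite and attained at some $x_*\in S$; moreover, by definition of $\alpha$ (as in the proof of statement~\ref{l:d:3}) one has $x^\top P_0x\le\alpha\|x\|^2$ whenever $x^\top P_1x\le0$.

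\emph{Zero gap.} The core is the classical theorem of Dines: the image $W=\{(x^\top(P_0-\alpha I_n)x,\ x^\top P_1x):x\in\real^n\}$ is a convex subset (in fact a convex cone) of $\real^2$. The implication of the previous paragraph says exactly that $W$ is disjoint from the open cone $\{(a,b):a>0,\ b<0\}$; separating these two convex sets (a hyperplane through $0\in W$, as both are cones) produces $(\ell_1,\ell_2)\ne(0,0)$ with $\ell_1\ge0$, $\ell_2\le0$ and $\ell_1(P_0-\alpha I_n)+\ell_2P_1\preceq0$. If $\ell_1=0$ then $\ell_2<0$ forces $P_1\succeq0$, contradicting $\bar x^\top P_1\bar x<0$; hence $\ell_1>0$, and $\tau_*:=-\ell_2/\ell_1\ge0$ satisfies $P_0-\tau_*P_1\preceq\alpha I_n$, i.e.\ $\mu_2(P_0-\tau_*P_1)\le\alpha$. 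Thus the dual objective at $\tau_*$ (recall $\rho_1=0$) is at most $\alpha$, while weak duality (statement~\ref{l:d:2}) gives $\alpha\le\beta$; combining, $\alpha=\beta=\mu_2(P_0-\tau_*P_1)$ and $\tau_*$ is a dual minimizer. With the primal maximizer $x_*$ this settles item (i).

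\emph{Complementarity and main obstacle.} For item (ii), specialize the chain~\eqref{eq.duality} to $x=x_*$, $\tau=\tau_*$: since $\WP{\cdot}{\cdot}$ is now bilinear, its subadditivity and homogeneity steps are equalities, so
\[
\alpha=\WP{P_0x_*}{x_*}=\WP{(P_0-\tau_*P_1)x_*}{x_*}+\tau_*\WP{P_1x_*}{x_*}\le\mu_2(P_0-\tau_*P_1)+\tau_*\cdot0=\beta=\alpha,
\]
using Lumer's equality and $\tau_*\ge0$, $\WP{P_1x_*}{x_*}\le0$; hence every inequality is an equality and in particular $\tau_*\WP{P_1x_*}{x_*}=0$. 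The only genuinely nontrivial ingredient in the whole argument is the zero-gap step, and there it is concentrated entirely in Dines' hidden-convexity theorem together with the small but essential fact that the separating functional normalizes to $\ell_1>0$ — which is exactly where strict feasibility (Slater) is indispensable, and without which the classical counterexamples exhibit a positive gap. Everything else is compactness of the unit sphere and bookkeeping inherited from Theorem~\ref{lemma:duality}; since, after the reductions, the statement is literally the classical S-Lemma, one could alternatively simply invoke~\cite{VAY:73} or~\cite[Theorem~5.17]{IP-TT:07}.
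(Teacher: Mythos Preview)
Your proof is correct and follows essentially the same route as the paper. The only difference is cosmetic: the paper invokes the classical S-Lemma~\cite[Theorem~1]{VAY:73},~\cite[Theorem~2.2]{IP-TT:07} as a black box to produce the multiplier $\tau_*$, whereas you unpack that black box by appealing directly to Dines' hidden-convexity theorem and a separation argument (which is precisely how those references prove the S-Lemma). Your complementarity argument via the chain~\eqref{eq.duality} is identical to the paper's.
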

\begin{proof}
  The proof retraces the proofs of Theorem~2 in~\cite{VAY:73}, see
  also~\cite[Appendix~A]{VAY:92}, and is based on the S-Lemma for two
  quadratic forms.  Since the inequality $p_1(x)<0$ has at least one
  solution, it also holds for some $x$ from the sphere $\norm{x}{2}\leq
  1$. Hence, problem~\eqref{prob:primal} (with $s=1$) is feasible and
  $\alpha>-\infty$.  The set of admissible vectors $x$
  in~\eqref{prob:primal} is compact, and hence the supremum can be replaced
  by the maximum.\footnote{Notice that the scalar product, unlike general
    WPs, is continuous in both arguments.} Since the functions $p_0,p_1$
  are homogeneous,
  \[
  p_1(x)\leq 0 \quad\implies\quad p_0(x)-\alpha\norm{x}{2}^2<0\quad\forall x\in\real^n.
  \]
  Recalling that the set $\setdef{x}{p_1(x)<0}$ is
  non-empty, the S-Lemma \cite[Theorem~1]{VAY:73}
  and~\cite[Theorem~2.2]{IP-TT:07} ensures that a number $\tau_*\geq 0$
  exists such that
  \[
  p_0(x)-\alpha\norm{x}{2}^2-\tau_* p_1(x)\leq 0,
  \]
  whence we obtain that
  \[
  \mu(P(\tau_*))=\sup_{\norm{x}{2}=1}(p_0(x)-\tau_* p_1(x))\leq\alpha.
  \]
  On the other hand, $\mu(P(\tau))\geq\alpha$ for any $\tau\geq 0$ due to
  the non-polynomial S-Lemma (Theorem~\ref{lemma:duality}). Hence, $\tau_*$ is a minimizer
  in~\eqref{prob:dual} (where $\rho_1=0$). The last statement can be easily
  derived from~\eqref{eq.duality}, by noticing that
  $p_0(x_*)=\mu(P(\tau_*))=\alpha$.
\end{proof}

\begin{remark}[Inhomogeneous quadratic functions]
Lemma~\ref{lem.yakub} remains valid~\cite{IP-TT:07} for more general inhomogeneous quadratic functions $\WP{P_ix+q_i}{x}+r_i$, 
where $P_i$ are constant matrices, $q_i$ are vectors and $r_i\in\mathbb{R}$ ($i=0,1$). Such quadratic constraints arise in 
many problems of robust control and optimal control~\cite{GB-JZ-DdH-JL:20}. Generalization of Theorem~\ref{lemma:duality} to 
such functions in the case where $\WP{\cdot}{\cdot}$ is a general weak pairing remains an open problem.
\end{remark}

\begin{remark}[Other zero-gap cases]
  In the Euclidean norm case some other situations (with $s\geq 2$) are
  known where the duality gap vanishes: this holds (under minor assumptions) when the
  image of the quadratic mapping $(p_0(x),\ldots,p_s(x))$ is convex. This
  counter-intuitive convexity always takes place when $s=1$ (two quadratic
  forms)~\cite{LLD:41}, being also a feature of some specially structured
  quadratic functions when $s>1$. For a detailed survey of recent
  achievements in the area, we refer the reader to~\cite{SVG-ALL:06,IP-TT:07}.
  As noted in~\cite{VAY:71,VAY:92}, Lemma~\ref{lem.yakub} can be
  extended to every such situation.
\end{remark}

\subsection{The S-Lemma for Metzler matrices in $\ell_1$ norm}\label{subsec.l1}

In this section we consider another situation where the duality gap
vanishes. Consider the situation of $\ell_1$-norm with the sign weak
pairing $\WP{x}{y}_1=\|y\|_1\sign(y)^\top x$. Notice that this function is
discontinuous in $y$.  Along with the ``non-negative'' optimization
problem~\eqref{prob:primal+} consider the problem with stricter constraints
\begin{equation}\label{prob:primal++}
      \begin{split}
      \sup_{x\in\real^n}& \quad \WP{P_0x}{x}_1 \\
      \subject &\quad \norm{x}{1}=1, x>\vectorzeros[n], \enspace \WP{P_1x}{x}_1<\rho_1,\ldots,\WP{P_sx}{x}_1<\rho_s.
  \end{split}
\end{equation}

  The optimization problems~\eqref{prob:primal} and~\eqref{prob:primal+} are
  non-convex, whereas the objective function and the constraints
  in~\eqref{prob:primal++} are linear, because
  $\WP{P_ix}{x}_1=\vectorones^{\top}P_ix$ for all $x\in\real_{>0}^n$ with $\|x\|_1 = 1$.  At
  the same time,~\eqref{prob:primal++} is not a standard LP problem due to
  the presence of strict inequalities.

\begin{lemma}[S-Lemma for Metzler matrices in $\ell_1$ norm]\label{lem.nogap-positive}
  Assume that matrices $P_0$ and   $(-P_1),(-P_2),\ldots,(-P_s)$ are Metzler and that the constraints
  in~\eqref{prob:primal++} are feasible. Then the following values
  coincide:
  \begin{enumerate}
  \item $\alpha :=$  supremum in~\eqref{prob:primal}  (where $\|\cdot\|=\|\cdot\|_1$);
  \item $\alpha^+ :=$  supremum in~\eqref{prob:primal+} (where $\|\cdot\|=\|\cdot\|_1$);
  \item $\alpha^{++} :=$   supremum in~\eqref{prob:primal++} (where $\|\cdot\|=\|\cdot\|_1$);
  \item $\beta^+ :=$   infimum in~\eqref{prob:dual+} (where $\mu^+=\mu_{1}^+$);
  \item $\beta:=$ infimum in~\eqref{prob:dual}  (where $\mu=\mu_{1}$).
  \end{enumerate}
Furthermore, the minimum value in problem~\eqref{prob:dual} exists.
\end{lemma}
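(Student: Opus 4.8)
The plan is to prove that all five quantities coincide by establishing the cyclic chain of inequalities
\[
\alpha^{++}\ \le\ \alpha^+\ \le\ \alpha\ \le\ \beta\ =\ \beta^+\ \le\ \alpha^{++},
\]
and to extract attainment of the dual minimum from linear-programming strong duality along the way. The three links $\alpha^{++}\le\alpha^+\le\alpha\le\beta$ are the easy ones: every $x$ feasible for~\eqref{prob:primal++} is feasible for~\eqref{prob:primal+}, and every $x$ feasible for~\eqref{prob:primal+} is feasible for~\eqref{prob:primal} (the objective being the same in all three), which gives the first two inequalities, while $\alpha\le\beta$ is exactly part~\ref{l:d:2} of Theorem~\ref{lemma:duality}. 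For $\beta=\beta^+$ I would observe that for every $\tau\ge\vectorzeros[s]$ the matrix $P(\tau)=P_0+\sum_{i=1}^s\tau_i(-P_i)$ is a nonnegative combination of Metzler matrices, hence Metzler, so $a_{jk}^+=|a_{jk}|$ on its off-diagonal entries and therefore $\mu_1(P(\tau))=\mu_1^+(P(\tau))$ by Proposition~\ref{prop.M-l1}; the two dual objective functions thus coincide on the entire feasible cone $\tau\ge\vectorzeros[s]$, whence $\beta=\beta^+$.

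The heart of the argument is the remaining link $\beta\le\alpha^{++}$, which I would obtain by realizing $\beta$ and a ``closed'' relaxation of~\eqref{prob:primal++} as a primal--dual pair of linear programs. Since $P(\tau)$ is Metzler, its $\ell_1$ log norm is the largest column sum, $\mu_1(P(\tau))=\max_j\big(\vectorones^\top P(\tau)\big)_j$; introducing an epigraph variable $t\in\real$ for this maximum shows that
\[
\beta=\min\big\{\,t+\tau^{\top}\rho\ :\ \tau\ge\vectorzeros[s],\ t\,\vectorones\ge P_0^{\top}\vectorones-\textstyle\sum_{i=1}^s\tau_i P_i^{\top}\vectorones\,\big\}
\]
is precisely the linear-programming dual of
\[
\bar\alpha:=\sup\big\{\,\vectorones^{\top}P_0x\ :\ \vectorones^{\top}x=1,\ x\ge\vectorzeros[n],\ \vectorones^{\top}P_ix\le\rho_i\ \ (i=1,\dots,s)\,\big\},
\]
where I use that $\WP{P_ix}{x}_1=\vectorones^{\top}P_ix$ whenever $x>\vectorzeros[n]$ and $\norm{x}{1}=1$. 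The LP defining $\bar\alpha$ is feasible (any point admissible for~\eqref{prob:primal++} is admissible here) and its feasible set lies in the compact probability simplex, so $\bar\alpha$ is finite; LP strong duality then yields $\bar\alpha=\beta$ together with the existence of an optimal $\tau_*$ for~\eqref{prob:dual}, which establishes the final claim of the lemma. It remains to see that passing from the closed constraints of $\bar\alpha$ to the open constraints of~\eqref{prob:primal++} loses no value, i.e.\ $\bar\alpha\le\alpha^{++}$: given a maximizer $\bar x$ for $\bar\alpha$ and any $x_0$ admissible for~\eqref{prob:primal++} (which exists by hypothesis), the segment $x_\lambda=(1-\lambda)\bar x+\lambda x_0$ with $\lambda\in(0,1]$ stays on the simplex, is strictly positive (as $x_0>\vectorzeros[n]$ and $\bar x\ge\vectorzeros[n]$), and satisfies $\vectorones^{\top}P_ix_\lambda<\rho_i$ since $\bar x$ meets the $i$-th constraint weakly and $x_0$ strictly; hence $x_\lambda$ is admissible for~\eqref{prob:primal++} and $\alpha^{++}\ge\vectorones^{\top}P_0x_\lambda\to\bar\alpha$ as $\lambda\to0^+$.

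Chaining everything, $\alpha^{++}\le\alpha^+\le\alpha\le\beta=\beta^+=\bar\alpha\le\alpha^{++}$, so all five values coincide, and $\tau_*$ attains the minimum in~\eqref{prob:dual}. I expect the main obstacle to be the careful bookkeeping in the LP-duality step --- matching $\mu_1$ of a Metzler matrix to its maximal column sum, transposing the data matrices correctly in the primal--dual correspondence, and verifying that the feasibility and boundedness hypotheses for LP strong duality genuinely hold --- rather than any single conceptual difficulty; the ``open versus closed feasible set'' point, by contrast, is handled cleanly by the elementary convex-combination argument above, and the $\beta=\beta^+$ reduction is immediate from Metzler-ness of $P(\tau)$.
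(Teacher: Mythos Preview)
Your proof is correct. The paper takes a slightly different route to the key inequality $\beta^+\le\alpha^{++}$: instead of recasting~\eqref{prob:dual} as a linear program and invoking LP strong duality, it works directly with the image set $\mathcal{P}=\{(p_0(x),\dots,p_s(x)):x\in\Delta\}$ (which is convex since each $p_i$ is linear on the open simplex $\Delta$), separates it from the orthant $\{y_0>\alpha^{++},\,y_1<\rho_1,\dots\}$ by a hyperplane with normal $\lambda$, and reads off the multiplier $\bar\tau_i=-\lambda_i/\lambda_0$. The two arguments are of course two faces of the same coin, since LP duality is driven by hyperplane separation. Where they differ operationally is in how the open-versus-closed simplex issue is handled: you close up the simplex to form the LP for $\bar\alpha$, cite strong duality (getting dual attainment for free), and then recover $\alpha^{++}\ge\bar\alpha$ by the convex-combination argument with a strictly feasible point; the paper instead keeps the open simplex throughout and appeals to Proposition~\ref{prop.M-l1} to identify $\sup_{x\in\Delta}\WP{P(\bar\tau)x}{x}_1$ with $\mu_1^+(P(\bar\tau))$. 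Your packaging has the minor advantage that attainment of the dual minimum drops out of the LP machinery rather than being extracted from the hyperplane coefficients; the paper's version avoids introducing the auxiliary $\bar\alpha$. Either way the bookkeeping you flag (Metzler $\Rightarrow$ $\mu_1=$ max column sum, the LP dual pair, feasibility/boundedness for strong duality) checks out.
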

\begin{proof}
  Since $P(\tau)$ is Metzler for each $\tau\geq 0$, we have
  $\mu_{1}^+(P(\tau))=\mu_{1}(P(\tau))$, and hence $\beta=\beta^+$. In view of evident inequalities $\alpha\geq\alpha^+\geq\alpha^{++}$ and Theorem~\ref{lemma:duality}, ensuring that $\alpha\leq\beta$,
it suffices to show that $\alpha^{++}\geq\beta^+$.

  Introducing the simplex $\Delta=\setdef{x>0}{\norm{x}{1}=1}$ and its
  closure $\bar\Delta=\setdef{x\geq 0}{\norm{x}{1}=1}$, one may easily
  notice that all functions $p_i(x)=\WP{P_ix}{x}_1$ are linear on
  $\Delta$. In particular, the set
  $\mcP=\setdef{(p_0(x),\ldots,p_s(x))}{x\in\Delta}$ is convex in
  $\real^{s+1}$.  By definition of $\alpha^{++}$, the set
  $\mcY=\setdef{y=(y_0,\ldots,y_s)\in\real^{s+1}}{y_0>\alpha^{++},y_1<\rho_1,\ldots,y_s<\rho_s}$
  is disjoint with $\mcP$. Hence, a (non-strictly) separating hyperplane
  exists, i.e., there exists
  $\lambda\in\real^{s+1}\setminus\{\vectorzeros\}$ such that
  \[
  \sum_{i=0}^s\lambda_ip_i(x)\leq \lambda^{\top}y\quad\text{for all } x\in\Delta,\,y\in\mcY.
  \]
  Obviously, the latter inequality may hold only when $\lambda_0\geq 0$ and
  $\lambda_{i}\leq 0$ for $i\in\{1,\ldots,s\}$. Furthermore, passing to the
  limit as $\mcY\ni y\to(\beta^+,\rho_1,\ldots,\rho^s)$, one has
  \[
  \lambda_0(p_0(x)-\alpha^{++})+\sum_{i=1}^s\lambda_i(p_i(x)-\rho_i)\leq 0.
  \]
  Since the constraints in~\eqref{prob:primal++} are feasible, we know
  $\lambda_i(p_i(x)-\rho_i)\geq 0$ for some $x\in\Delta$ and, furthermore,
  the inequalities are strict unless $\lambda_i=0$. Therefore,
  $\lambda_0>0$.  Introducing now $\bar\tau_i=-\lambda_i/\lambda_0\leq 0$,
  for all $i\in\{1,\ldots,s\}$, one has
  \[
  \alpha^{++}\geq p_0(x)-\sum_{i=1}^s\tau_ip_i(x)+\tau^{\top}\rho=
  \WP{P(\bar\tau)x}{x}+\bar\tau^{\top}\rho, \quad \text{for all } x\in\Delta.
  \]
  Taking the supremum over all $x\in\Delta$ and applying
  Proposition~\ref{prop.M-l1} to the Metzler matrix $P(\bar\tau)$,
  one has
  $
  \alpha^{++}\geq \mu_{1}^+(P(\bar\tau))+\bar\tau^{\top}\rho\geq\beta^+.
  $
  This finishes the proof, showing also that $\bar\tau$ is a minimizer
  in~\eqref{prob:dual+} and~\eqref{prob:dual}.
\end{proof}

\subsection{Some counterexamples}

Below we demonstrate that the conditions of Lemma~\ref{lem.nogap-positive}, in fact, cannot be discarded.
In the examples below, $x=(x_1,x_2)^{\top}\in\R^2$ and $\|x\|=\|x\|_1=|x_1|+|x_2|$, we consider only one constraint $s=1$.

\textbf{Example 1.} Our first example demonstrates that the feasibility of strict inequalities~\eqref{prob:primal++}
is essential. Consider the parameters
\[
P_0=
\begin{bmatrix}
1 & 1\\
0 & 0
\end{bmatrix},
\quad P_1=\begin{bmatrix}
0 & 0\\
0 & -1
\end{bmatrix},\quad \rho_1=-1.
\]
In view of Table~\ref{table:equivalences}, $p_0(x)=(x_1+x_2,0)\sgn x=(x_1+x_2)\sgn x_1$ and
$p_1(x)=(0,-x_2)\sgn x=-|x_2|$. Hence the constraints in~\eqref{prob:primal+} are feasible
and satisfied by the unique vector $x_1=0,x_2=1$, and the supremum in~\eqref{prob:primal+} is $\alpha^+=0$.
At the same time, the constraints in~\eqref{prob:primal++} are infeasible.
Matrices $P_0$ and $(-P_1)$ are, obviously, Metzler.
Notice now that the cost function in~\eqref{prob:dual+} is constant, since
\[\
\mu_1^+(P_0-\tau P_1)+\tau\rho_1=\mu_1^+\begin{pmatrix}
1 & 1\\
0 & \tau
\end{pmatrix}-\tau=(1+\tau)-\tau=1\quad\forall\tau\geq 0,
\]
and hence $\beta^+=1>0=\alpha^+$.

\textbf{Example 2.} Our next example shows that the requirement of Metzler matrices in Lemma~\ref{lem.nogap-positive} is essential.

\textbf{A)} Consider the parameters
\[
P_0=
\begin{bmatrix}
1 & 0\\
-1 & 0
\end{bmatrix},
\quad P_1=\begin{bmatrix}
0 & 0\\
-1 & 0
\end{bmatrix},\quad \rho_1\in(-1,0).
\]
Here, matrix $(-P_1)$ is Metzler, whereas $P_0$ is not.
Then, $p_1(x)=-x_1\sgn x_2$ and the constraints in constraints in~\eqref{prob:primal++} are feasible and satisfied, e.g., by $x_1=1-\varepsilon$, $x_2=\varepsilon$ for $\varepsilon>0$ being small.
Notice, now that if $p_1(x)\leq\rho_1<0$, then
\[
p_0(x)=x_1\sgn x_1-x_1\sgn x_2\overset{x_1\sgn x_2>0}{=}|x_1|-|x_1|=0,
\]
and hence $\alpha^+=0$. At the same time, we have
\[\
\mu_1^+(P_0-\tau P_1)+\tau\rho_1=\mu_1^+\begin{pmatrix}
1 & 0\\
\tau-1 & 0
\end{pmatrix}+\tau\rho_1=
\begin{cases}
1+\tau\rho_1,&\tau\leq 1,\\
\tau(1+\rho_1),&\tau>1.
\end{cases},
\]
so the infimum in~\eqref{prob:dual+} is $\beta^+=1+\rho_1>0=\alpha^+$.

\textbf{B)} Consider now the parameters
\[
P_0=
\begin{bmatrix}
1 & 0\\
1 & 0
\end{bmatrix},
\quad P_1=\begin{bmatrix}
0 & 0\\
1 & 0
\end{bmatrix},\quad\rho_1\in (0,1/2).
\]
Now $P_0$ is Metzler, where $(-P_1)$ is not. Obviously,
$p_1(x)=x_1\sgn x_2$ and the constraints in~\eqref{prob:primal++} are feasible (being satisfied, e.g.,  by $x_1=\rho_1/2$, $x_2=1-x_1)$. Also, $p_0(x)=|x_1|+x_1\sgn x_2$ achieves its maximum at $x_1=1,x_2=0$, so the supremum in~\eqref{prob:primal+} is $\alpha^+=1$ (indeed, if $x_2>0$, then $p_0(x)=2x_1\leq 2\rho_1<1$). However,
\[\
\mu_1^+(P_0-\tau P_1)+\tau\rho_1=\mu_1^+\begin{pmatrix}
1 & 0\\
1-\tau & 0
\end{pmatrix}+\tau\rho_1=
\begin{cases}
2-\tau(1-\rho_1),&\tau\leq 1,\\
\tau(1+\rho_1),&\tau>1
\end{cases},
\]
that is, $\beta^+=1+\rho_1>\alpha^+$.

\begin{remark}
Example~2 also demonstrates the duality gap in the pair of dual problems~\eqref{prob:primal},~\eqref{prob:dual}: in both situations \textbf{A)} and \textbf{B)}, the
suprema in~\eqref{prob:primal} and~\eqref{prob:primal+} coincide $\alpha=\alpha^+$, and hence $\beta\geq\beta^+>\alpha$, because $\mu_1(A)\geq\mu_1^+(A)$ for each matrix $A$.
\end{remark}

\section{Absolute stability and absolute contractivity of Lur'e systems}\label{sec.lurie}

Consider first the \emph{Lur'e system} shown in Fig.~\ref{fig.lurie}
\begin{gather}
   \dot z(t)=Az(t)+Bw(t)\in\real^d,\quad y(t)=Cz(t)\in\real\label{eq.system}\\
   w(t)=\varphi(t,y(t))\in\real.\label{eq.nonlin}
\end{gather}

The function $\varphi$ is traditionally called ``nonlinearity'' (although one can also consider linear feedback as well). The nonlinear block need not
be static, moreover, it can have a more general form than~\eqref{eq.nonlin} and be a nonlinear operator on the whole trajectory $y(\cdot)|_0^t$.
In absolute stability theory, $\varphi$ is usually uncertain and belongs to the class of functions that obey certain \emph{quadratic constraints}~\cite{AKG-GAL-VAY:04,SVG-ALL:06,AM-AR:97}, the simplest of which are the \emph{sector} condition\footnote{In the case of static continuous nonlinearity $w(t)=\phi(y(t))$ and $-\infty<\zeta\varkappa<\infty$, the sector condition implies that $\phi(0)=0$ and $\{(y,w)| w=\phi(y))\}$ of the nonlinearity lies in the sector bounded by two lines $w=\varkappa y$ and $w=\zeta y$.}
\begin{equation}
\zeta\leq\frac{\varphi(t,y)}{y}\leq\varkappa\quad\forall y\ne 0\label{eq.sector0-}
\end{equation}
and the more sophisticated \emph{slope} condition\footnote{In the case of static differentiable nonlinearity $w(t)=\phi(y(t))$, this condition means that the minimal and maximal slope of the curve $w=\phi(y)$ are bounded, respectively, by $\zeta$ and $\varkappa$.}
\begin{equation}
\zeta\leq\frac{\varphi(t,y_1)-\varphi(t,y_2)}{y_1-y_2}\leq\varkappa\quad\forall y_1\in\real\,\forall y_2\ne y_1.\label{eq.slope0-}
\end{equation}
Here $\zeta\geq-\infty$ and $\varkappa\leq\infty$, where at least one of the latter inequalities is strict.
Note that the slope condition, generally does not entail the sector condition as exemplified, e.g., by a non-zero constant function; this implication is valid only
when $\varphi(t,0)=0$.

\subsection{Sector and slope constraints}\label{subsec.constraints}

In fact, the results on stability and contractivity of Lur'e system do not use the explicit form of the signal $w(t)$; the only information used to derive stability/contractivity criteria are the sector and slope constraints imposed on the pair of signals $w(t),y(t)$. We give a formal definition.
\begin{definition}
A pair of signals $(w(t),y(t))$ of the linear system~\eqref{eq.system} obeys the \emph{sector constraint} with sector slopes $\zeta,\varkappa$ if
\begin{equation}
\zeta y(t)^2\leq w(t)y(t)\leq \varkappa y(t)^2\;\;\forall t.\label{eq.sector0}
\end{equation}
Two input/output pairs $(w_1(t),y_1(t))$ and $(w_2(t),y_2(t))$ of~\eqref{eq.system} obeys the \emph{slope constraint} with slopes $\zeta,\varkappa$ if
\begin{equation}
\zeta \Delta y(t)^2\leq \Delta w(t)\Delta y(t)\leq \varkappa \Delta y(t)^2,\label{eq.slope0}
\end{equation}
(where, for brevity $\Delta w=w_1-w_2$ and $\Delta y=y_1-y_2$).
\end{definition}
Obviously, if the function $\varphi$ obeys the sector condition~\eqref{eq.sector0-} (respectively, the slope condition~\eqref{eq.slope0-}), then every solution (respectively, pair of solutions) of the Lur'e system~\eqref{eq.system},\eqref{eq.nonlin} obeys the sector constraint~\eqref{eq.sector0} (respectively, the slope constraint~\eqref{eq.slope0}).

Without loss of generality, we may confine ourselves to sector and slope constraints with $\zeta=0$, referring the upper slope $\varkappa\in (0,\infty]$
to as the \emph{rate} of the constraint:
\begin{align}
   0\leq wy \leq {\varkappa} y^2&\Longleftrightarrow  w(\varkappa^{-1}w-y)\leq 0,   \label{eq.sector+}\\
   0\leq \Delta w\Delta y \leq {\varkappa} \Delta y^2&\Longleftrightarrow  \Delta w(\varkappa^{-1}\Delta w-\Delta y)\leq 0.   \label{eq.slope+}
\end{align}
Otherwise, system~\eqref{eq.system} and $\zeta,\varkappa$ can be replaced by the system
\[
\dot z(t)=A'z(t)+B'v(t),\quad y(t)=Cz(t),
\]
and the new slopes $\zeta'=0$, $\varkappa'$, which are defined as
follows
\begin{enumerate}
\item in the case where $\zeta>-\infty$, we denote $v(t)=w(t)-\zeta y(t)$ and $A'=A+\zeta BC$, $B'=B$, $\varkappa'=\varkappa-\zeta$;
\item in the case where $\zeta=-\infty$, $\varkappa<\infty$, we denote $v(t)=\varkappa y(t)-w(t)$ and $A'=A-\varkappa BC$, $B'=B$, $\varkappa'=\infty$.
\end{enumerate}

\subsection{Problem setup}

Inspired by the curve norm derivative formula~\eqref{eq:cndf}, we consider
the following stability analysis problem.

\begin{problem}[Global convergence of a Lur'e system with scalar nonlinearity]
  \label{prob:stability}
  Given a norm $\norm{\cdot}{}$ with compatible weak pairing
  $\WP{\cdot}{\cdot}$, find conditions on $(A,B,C,\varkappa)$ ensuring the existence of $c>0$ satisfying the following
  \emph{Lyapunov inequality}:
  \begin{equation}
    \label{eq.lyap}
    \WP{Az+Bw}{z}\leq -c\|z\|^2\;\;\text{for all } z\in\real^d,
    w\in\real
    \enspace\text{such that $w(\varkappa^{-1}w-Cz)\leq 0$}.
  \end{equation}
\end{problem}

The inequality~\eqref{eq.lyap} allows to use the function $z\mapsto\norm{z}{}^2$ as a global Lyapunov function in order to prove the exponential convergence of each solution to $0$ or the exponential contractivity (both with rate $c$) as shown by the following simple lemma.
\begin{lemma}\label{lem:stab-contr}
Suppose that the WP $\WP{\cdot}{\cdot}$ satisfies the curve norm derivative formula~\eqref{eq:cndf} and the Lyapunov inequality~\eqref{eq.lyap}.
Then, each solution $(z,w,y)$ of~\eqref{eq.system} that obeys
the sector constraint~\eqref{eq.sector+} admits the following estimate
\begin{equation}\label{eq.abs-stab}
\|z(t)\|\leq\|z(0)\|e^{-ct}\,\forall t\geq 0.
\end{equation}
Similarly, the deviation between two solutions $(z_i,w_i,y_i)$ (where $i=1,2$) that obey the slope constraint~\eqref{eq.slope+} can be estimated as follows
\begin{equation}\label{eq.abs-contr}
\|\Delta z(t)\|\leq e^{-ct}\|\Delta z(0)\|\,\forall t\geq 0.
\end{equation}
In both situations, $t$ varies on the maximal interval where the solution(s) exist(s).
\end{lemma}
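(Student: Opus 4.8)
The plan is to use $z\mapsto\norm{z}{}^2$ as a Lyapunov function: differentiate it along a trajectory via the curve norm derivative formula~\eqref{eq:cndf}, substitute the dynamics, invoke the Lyapunov inequality~\eqref{eq.lyap} at each time instant, and finish with a Gr\"onwall-type comparison argument. The two claims are proved by the same computation, applied once to $z$ and once to the difference $\Delta z$ of two solutions.

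For the first estimate, fix a solution $(z,w,y)$ of~\eqref{eq.system} obeying the sector constraint~\eqref{eq.sector+}, and set $V(t)=\norm{z(t)}{}^2$. By~\eqref{eq:cndf}, for almost every $t$ in the maximal interval of existence,
\[
D^+V(t)=2\WP{\dot z(t)}{z(t)}=2\WP{Az(t)+Bw(t)}{z(t)}.
\]
Since $y(t)=Cz(t)$, the constraint~\eqref{eq.sector+} reads $w(t)\big(\varkappa^{-1}w(t)-Cz(t)\big)\le 0$, so the pair $(z(t),w(t))$ is admissible in~\eqref{eq.lyap}; this yields $D^+V(t)\le -2c\,V(t)$ for almost every $t$. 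Applying the comparison lemma to $t\mapsto e^{2ct}V(t)$, whose right upper Dini derivative is nonpositive a.e.\ and which is therefore nonincreasing, gives $V(t)\le V(0)e^{-2ct}$, and taking square roots yields~\eqref{eq.abs-stab}.

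For the second estimate, let $(z_i,w_i,y_i)$, $i=1,2$, be two solutions obeying the slope constraint~\eqref{eq.slope+}. By linearity of~\eqref{eq.system}, $\Delta z=z_1-z_2$ satisfies $\dot{\Delta z}(t)=A\,\Delta z(t)+B\,\Delta w(t)$ with $\Delta y(t)=C\,\Delta z(t)$, and~\eqref{eq.slope+} is precisely $\Delta w(t)\big(\varkappa^{-1}\Delta w(t)-C\,\Delta z(t)\big)\le 0$, so $(\Delta z(t),\Delta w(t))$ is again admissible in~\eqref{eq.lyap}. Repeating the computation above with $V(t)=\norm{\Delta z(t)}{}^2$ gives $D^+V(t)\le-2c\,V(t)$ and hence~\eqref{eq.abs-contr}.

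The one point deserving care is the regularity needed for~\eqref{eq:cndf}: since $w(t)=\varphi(t,y(t))$ need not be continuous, $z(\cdot)$ and $\Delta z(\cdot)$ are in general only locally absolutely continuous rather than $C^1$. I would handle this by noting that~\eqref{eq:cndf}, together with the product-rule and comparison manipulations used above, remain valid for locally absolutely continuous curves with $D^+$ read as the right upper Dini derivative --- the standing convention of the paper --- so that the differential inequality $D^+V\le-2c\,V$ integrates to the stated exponential bound. I expect this to be the main (and essentially the only) obstacle; the remainder is a direct substitution into the Lyapunov inequality.
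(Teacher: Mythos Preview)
Your proposal is correct and follows exactly the approach of the paper: define $V(t)=\|z(t)\|^2$ (respectively $V(t)=\|\Delta z(t)\|^2$), apply the curve norm derivative formula together with the Lyapunov inequality~\eqref{eq.lyap} to obtain $D^+V(t)\le -2cV(t)$, and conclude via a comparison/Gr\"onwall argument. The paper's own proof is just a one-line sketch of this same computation, so your write-up is simply a more detailed version of it; your added remark on regularity is a reasonable caveat but not something the paper addresses.
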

\begin{proof}
  The proof is straightforward by noticing that~\eqref{eq.lyap} entails that $D^{+} V(t)\leq -2cV(t)$, where
  $V(t)=\|z(t)\|^2$ in the case of sector constraint~\eqref{eq.sector+} and $V(t)=\|\Delta z(t)\|^2$ in the case of slope constraint~\eqref{eq.slope+}.
\end{proof}

\begin{remark}[Stability and contractivity]
In the case of Lur'e system~\eqref{eq.system},\eqref{eq.nonlin}, the point $z=\vectorzeros[d]$ is usually supposed to be an equilibrium (this is automatically implied by
~\eqref{eq.sector+} provided that $\varkappa<\infty$). In this situation,~\eqref{eq.abs-stab} is the stronger form of global exponential stability of
this equilibrium, which is called \emph{absolute stability}. The term \emph{absolute} emphasizes that the exponential stability (with a known rate) is ensured for \emph{every} nonlinear feedback $\varphi$ that obeys the sector condition and, more generally, for every input $w(t)$ that is restricted at any time by sector constraint~\eqref{eq.sector+}. The inequality~\eqref{eq.abs-contr} is guaranteed for each nonlinear feedback $\varphi$ that obeys the slope condition; in this sense it may be called the \emph{absolute contractivity}.
\end{remark}

\begin{remark}[Necessary condition for Problem~\ref{prob:stability}]
  Since $w=0$ obviously obeys the sector constraint~\eqref{eq.sector+}, the
  Lyapunov inequality~\eqref{eq.lyap} can hold only if $\WP{Az}{z}\leq
  -c\|z\|^2$, for all $z\in\real^d$.  By Lumer's equality~\eqref{eq:Lumer},
  the latter inequality is equivalent to $\mu(A)\leq-c<0$.  In sum, a
  necessary condition for Problem~\ref{prob:stability} is $\mu(A)\leq-c$.
  Note, additionally, that this condition implies $A$ is Hurwitz, since the
  log norm is an upper bound on the spectral abscissa.
\end{remark}

\begin{remark}
In fact,~\eqref{eq.lyap} can be of interest even for $c\leq 0$; in this case, we cannot guarantee convergence/contractivity of solutions, however, we still can get an estimate of the solution (respectively, the deviation of two solutions). The key result provided below (Theorem~\ref{thm.th-lp}) retains its validity for an arbitrary choice of $c\in\real$.
\end{remark}

\subsection{Transcription via weak pairings}
In this section we aim to transcribe equation~\eqref{eq.lyap} in
Problem~\ref{prob:stability}.  To this end, we introduce two matrices
\begin{equation}\label{eq.matrices1}
  P_0=
  \begin{bmatrix}
    A+cI_d & B\\
    \vectorzeros[1\times d] & 0
  \end{bmatrix}, \quad P_1=
  \begin{bmatrix}
    \vectorzeros[d\times d] & \vectorzeros[d\times 1]\\
    -C & \varkappa^{-1}
  \end{bmatrix}
\end{equation}
and the column vector $x=\begin{bmatrix} z \\ w \end{bmatrix}\in\real^{d+1}$. Obviously,
\begin{equation}
  P_0 x = \begin{bmatrix} (A+cI_d)z + Bw \\  0 \end{bmatrix}
  \quad\text{and}\quad
  P_1 x = \begin{bmatrix} \vectorzeros[d] \\  \varkappa^{-1}w -C z \end{bmatrix}.
\end{equation}

We will show that, in the case of $\ell_p$ norms,~\eqref{eq.lyap} boils down to the implication
\[
\WP{P_1x}{x}_p\leq 0\Longrightarrow\WP{P_0x}{x}_p\leq 0
\]
which can be tested via the non-polynomial S-Lemma (Theorem~\ref{lemma:duality}, statement~\ref{l:d:3}). To prove this, we need two technical lemmas.

\begin{lemma}[Sector-constraint transcription, for $\ell_p$]
  \label{lemma:constr-transcript}
  The following conditions are equivalent for each $p\in [1,\infty)$ and $x=\begin{bmatrix} z  \\ w  \end{bmatrix}\in\real^{d+1}$:
  \begin{enumerate}
  \item scalars $w$ and $y=Cz$ obey the inequality~\eqref{eq.sector+};
  \item $\WP{P_1x}{x}_p\leq 0$.
  \end{enumerate}
  In the case $p=\infty$, implication $(i)\Longrightarrow (ii)$ holds. The inverse implication $(ii)\Longrightarrow (i)$ is valid when $|w|>\|z\|_{\infty}$.
\end{lemma}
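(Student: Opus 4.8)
The plan is to compute $\WP{P_1x}{x}_p$ explicitly from the formulas in Table~\ref{table:equivalences} and read off its sign. The structural fact that makes this painless is that, by the block form of $P_1$, the vector $P_1x$ is supported on its last coordinate alone: $P_1x=(\vectorzeros[d]^{\top},u)^{\top}$ with $u:=\varkappa^{-1}w-Cz=\varkappa^{-1}w-y$. Hence in every pairing formula only the last coordinate of $x$, namely $w$, pairs nontrivially with $P_1x$, and the whole question reduces to comparing the sign of $\WP{P_1x}{x}_p$ with the sign of $w(\varkappa^{-1}w-y)=wu$, which is precisely the quantity constrained by~\eqref{eq.sector+}.

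For $p\in(1,\infty)$ I would substitute into $\WP{P_1x}{x}_p=\|x\|_p^{2-p}(x\circ|x|^{p-2})^{\top}(P_1x)$; only the $(d+1)$-st summand survives, giving $\WP{P_1x}{x}_p=\|x\|_p^{2-p}\,w|w|^{p-2}u=\|x\|_p^{2-p}\sgn(w)|w|^{p-1}u$, which is continuous in $w$ and vanishes at $w=0$ (here $p>1$ matters). If $w\neq0$, rewrite it as $\|x\|_p^{2-p}|w|^{p-2}(wu)$; since $\|x\|_p^{2-p}|w|^{p-2}>0$, the sign of $\WP{P_1x}{x}_p$ is that of $wu$, so $(ii)$ holds iff $wu\le0$ iff $(i)$ holds. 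If $w=0$ then $\WP{P_1x}{x}_p=0=wu$, so $(i)$ and $(ii)$ both hold trivially. The case $p=1$ is handled identically with $\WP{P_1x}{x}_1=\|x\|_1\sgn(w)u$ (using $\sgn(0)=0$ on the degenerate coordinate) in place of the above.

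The delicate case is $p=\infty$, since $\WP{P_1x}{x}_\infty=\max_{i\in\Iinfty(x)}x_i(P_1x)_i$ only detects the last coordinate when $d+1\in\Iinfty(x)$, i.e.\ when $|w|\ge\|z\|_\infty$, whereas any index $i\le d$ lying in $\Iinfty(x)$ contributes $x_i(P_1x)_i=0$. I would split into: (a) $|w|<\|z\|_\infty$, where $d+1\notin\Iinfty(x)$ and $\WP{P_1x}{x}_\infty=0$; (b) $|w|=\|z\|_\infty$, where $\WP{P_1x}{x}_\infty=\max(0,wu)$; and (c) $|w|>\|z\|_\infty$, where $\Iinfty(x)=\{d+1\}$ and $\WP{P_1x}{x}_\infty=wu$. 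If $(i)$ holds then $wu\le0$, and in each subcase $\WP{P_1x}{x}_\infty$ is the maximum of a subset of $\{0,wu\}$, hence $\le\max(0,wu)=0$; this proves $(i)\Rightarrow(ii)$. For the converse, the hypothesis $|w|>\|z\|_\infty$ puts us in subcase (c), where $\WP{P_1x}{x}_\infty=wu$ exactly, so $\WP{P_1x}{x}_\infty\le0$ forces $wu\le0$, i.e.\ $(i)$; subcase (a) makes clear that a restriction of this type is genuinely needed.

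The argument is essentially bookkeeping and I do not expect a serious obstacle; the only points requiring care are the sign conventions — continuity of $\sgn(w)|w|^{p-1}$ at $w=0$ for $p>1$, the convention $\sgn(0)=0$ for $p=1$, and the $\ell_\infty$ tie-breaking within $\Iinfty(x)$ — which is precisely what forces the extra assumption $|w|>\|z\|_\infty$ in the $p=\infty$ reverse implication.
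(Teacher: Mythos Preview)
Your proof is correct and follows essentially the same route as the paper: compute $\WP{P_1x}{x}_p$ from the table, observe that only the last coordinate of $P_1x$ is nonzero so the pairing reduces to a scalar multiple of $\sgn(w)|w|^{p-1}u$ (or the $\ell_\infty$ max expression), and match signs with $wu$. Your $p=\infty$ analysis is in fact slightly more thorough than the paper's, since you explicitly treat the tie case $|w|=\|z\|_\infty$, but the method is identical.
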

\begin{proof}
Consider first the case $p=1$.
  Noting $P_1 x = \begin{bmatrix} \vectorzeros[d] \\ -C z + \varkappa^{-1}
    w \end{bmatrix}$, we compute
  \begin{align*}
    \WP{P_1x}{x}_1 &= (\norm{z}{1}+|w|) \sign(x)^\top P_1 x
    \\
    &= (\norm{z}{1}+|w|)
    \begin{bmatrix} \sign(z)  \\ \sign(w) \end{bmatrix}^\top
    \begin{bmatrix} \vectorzeros[d] \\ \varkappa^{-1} w - C z \end{bmatrix}
    = (\norm{z}{1}+|w|)
    \sign(w) (\varkappa^{-1} w - C z).
  \end{align*}
  Clearly, $\sign(w) (-C z + \varkappa^{-1} w)\leq0$  if and only if $w(\varkappa^{-1}w-Cz)\leq 0$.

  The case $p\in(1,\infty)$ is considered in a similar way:
   \begin{align*}
    \WP{P_1 x}{x}_p &=  \|x\|_p^{2-p}(x\circ|x|^{p-2})^\top P_1x
    \\
    &= \|x\|_p^{2-p}
    \begin{bmatrix} \sign(z)\circ|z|^{2-p}  \\ \sign(w)|w|^{p-1} \end{bmatrix}^\top
    \begin{bmatrix} \vectorzeros[d] \\ -C z + \varkappa^{-1} w \end{bmatrix}\\
    &=\|x\|_p^{2-p}|w|^{p-1}\sign(w) (-C z + \varkappa^{-1} w),
  \end{align*}
  which expression is $\leq 0$ if and only if $w(\varkappa^{-1}w-Cz)\leq 0$.

  In the case $p=\infty$, one has $\WP{P_1 x}{x}_p=\max_{i\in I_{\infty}(x)}(P_1 x)_i x_i$. Obviously,
  \[
  (P_1 x)_i x_i=
  \begin{cases}
  0,\quad &i=1,\ldots,n;\\
  w(\varkappa^{-1}w-Cz),&i=n+1.
  \end{cases}
  \]
  Therefore,~\eqref{eq.sector+}  entails inequality $\WP{P_1 x}{x}_{\infty}\leq 0$, whereas the inverse statement is, generally, incorrect except for the case where $I_{\infty}(x)=\{n+1\}$, that is,
  $|w|>\|z\|_{\infty}$.
\end{proof}

To show that the first (Lyapunov) inequality in~\eqref{eq.lyap} shapes into $\WP{P_0x}{x}\leq 0$, an additional technical lemma is required. Notice that for
two vectors in $\real^{d+1}$
\begin{equation}\label{eq.aux1}
  x^1=
\begin{bmatrix}
z^1\\0
\end{bmatrix},\quad
x^2=
\begin{bmatrix}
z^2\\w^2
\end{bmatrix} \in \real^{d+1},
\end{equation}
and $p\ne 2$, generally, $\WP{x^1}{x^2}_p\ne\WP{z^1}{z^2}_p$. However, the following result holds.
\begin{lemma}
[Useful property for weak pairing]
  \label{lemma:Lyap-transcript}\begin{enumerate}
  \item For the vectors~\eqref{eq.aux1} (where $z^i\in\real^d$ and
    $w^2\in\real$) and $1\leq p<\infty$, we have
    \begin{gather}
      \WP{x^1}{x^2}_p\leq 0 \qquad\iff\qquad
      \WP{z^1}{z^2}_p\leq 0.
    \end{gather}

  \item If, additionally, $|w^2|<\|z^2\|_{\infty}$ or $w^2=\|z^2\|_{\infty}=0$, then
  $\WP{z^1}{z^2}_{\infty}=\WP{x^1}{x^2}_{\infty}$.
  \end{enumerate}
\end{lemma}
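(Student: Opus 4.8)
The plan is to verify the two parts of Lemma~\ref{lemma:Lyap-transcript} by direct computation using the explicit weak-pairing formulas in Table~\ref{table:equivalences}, treating $p=1$, $p\in(1,\infty)$, and $p=\infty$ separately, exactly as in the proof of Lemma~\ref{lemma:constr-transcript}. The underlying reason the result holds is that the last coordinate of $x^1$ is zero, so when we compute $\WP{x^1}{x^2}_p$ the ``weighting'' contributed by the last coordinate of $x^2$ multiplies a zero and drops out, leaving only a positive scalar multiple of $\WP{z^1}{z^2}_p$; a positive scalar does not affect the sign, which is all part~(i) asks about.

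For part~(i), case $p\in(1,\infty)$: I would write $\WP{x^1}{x^2}_p = \|x^2\|_p^{2-p}\,(x^2\circ|x^2|^{p-2})^\top x^1$. Since $x^1=(z^1{}^\top,0)^\top$, the last entry of the column $x^2\circ|x^2|^{p-2}$ is irrelevant, and the inner product equals $(z^2\circ|z^2|^{p-2})^\top z^1$. Hence $\WP{x^1}{x^2}_p = \|x^2\|_p^{2-p}\,(z^2\circ|z^2|^{p-2})^\top z^1$, while $\WP{z^1}{z^2}_p = \|z^2\|_p^{2-p}\,(z^2\circ|z^2|^{p-2})^\top z^1$. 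The two differ only by the strictly positive factor $\|x^2\|_p^{2-p}/\|z^2\|_p^{2-p}$ (note $z^2\ne 0$ is needed for this to be well-defined; if $z^2=0$ both pairings vanish and the equivalence is trivial), so they have the same sign, giving the claimed equivalence. Case $p=1$ is the same computation with the formula $\WP{x}{y}_1=\|y\|_1\sign(y)^\top x$: the $\sign(w^2)\cdot 0$ term vanishes, leaving $\WP{x^1}{x^2}_1=(\|z^2\|_1+|w^2|)\sign(z^2)^\top z^1$, a positive multiple of $\WP{z^1}{z^2}_1=\|z^2\|_1\sign(z^2)^\top z^1$, and again $z^2=0$ is the trivial case. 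In all these cases the positive prefactor is exactly what the ``$\le 0 \iff \le 0$'' formulation is designed to absorb, which is why the lemma is stated as a sign equivalence rather than an equality.

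For part~(ii), the $\ell_\infty$ case, equality (not merely sign equivalence) is claimed, but only under the extra hypothesis $|w^2|<\|z^2\|_\infty$ or $w^2=\|z^2\|_\infty=0$. Here $\WP{x^1}{x^2}_\infty=\max_{i\in I_\infty(x^2)} x^1_i x^2_i$. Under the hypothesis, the last coordinate $n+1$ is \emph{not} in $I_\infty(x^2)$ (when $|w^2|<\|z^2\|_\infty$), or else $x^2=0$ and both pairings are $0$ (when $w^2=\|z^2\|_\infty=0$). In the first situation $I_\infty(x^2)=I_\infty(z^2)$ as a subset of $\{1,\dots,d\}$, and since $x^1_i=z^1_i$ for $i\le d$, the maximum defining $\WP{x^1}{x^2}_\infty$ is literally the same finite maximum defining $\WP{z^1}{z^2}_\infty$, giving equality. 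I expect the only subtle point — and the ``main obstacle,'' though a mild one — is bookkeeping the degenerate/boundary possibilities for $p=\infty$: one must check that when $|w^2|=\|z^2\|_\infty\ne 0$ the index $n+1$ could join $I_\infty(x^2)$ and contribute the extra term $w^2\cdot 0=0$, which can only \emph{lower} the max relative to $\WP{z^1}{z^2}_\infty$ if the latter is positive, so equality genuinely fails in general and the stated hypothesis is exactly what rules this out. This also explains why part~(i)'s $p<\infty$ statement is only an equivalence: for $p<\infty$ the pairing is continuous and the prefactor mismatch is unavoidable, whereas the $p=\infty$ max-structure allows a clean equality once the last coordinate is excluded from the active set.
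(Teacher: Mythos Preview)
Your proof is correct and follows essentially the same approach as the paper's: direct computation from Table~\ref{table:equivalences} shows that for $p<\infty$ the two pairings differ by a strictly positive scalar (hence share sign), while for $p=\infty$ the hypothesis forces $I_\infty(x^2)=I_\infty(z^2)\subset\{1,\dots,d\}$, giving literal equality. One small slip in your closing aside: including the extra term $0$ in the $\max$ can only \emph{raise} (not lower) the value, and this happens precisely when $\WP{z^1}{z^2}_\infty<0$---but this side remark does not affect the proof of the lemma itself.
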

\begin{proof}
  The proof is straightforward from Table~\ref{table:equivalences}. In the
  case where $x^2=\vectorzeros$ the statement is trivial, so we always
  suppose that $\|x^2\|\ne 0$.

  If $p<\infty$, then  $\WP{z^1}{z^2}_{p}=k\WP{x^1}{x^2}_{p}$, where $k>0$ is some
  positive number. In the case $p=\infty$ (and thus
  $|w^2|<\|z^2\|_{\infty}$), one may notice that $
  \WP{x^1}{x^2}_{\infty}=\max_{i\in I_{\infty}(x^2)} x^1_ix^2_i, $ where
  $I_{\infty}(x^2)=\{j:|x^2_j|=\|x^2\|_{\infty}\}=I_{\infty}(z^2)\not\ni d+1$. Therefore,
   $\WP{x^1}{x^2}_{\infty}=\max_{i\in I_{\infty}(z^2)} z^1_iz^2_i=\WP{z^1}{z^2}_{\infty}$.
\end{proof}
\begin{corollary}[Lyapunov-inequality transcription]\label{cor:Lyap-transcript}
For all $p\in [1,\infty)$, the inequalities $\WP{Az+Bw}{z}_p\leq -c\|z\|^2_p$ and $\WP{P_0x}{x}_p\leq 0$ are equivalent. If $p=\infty$, these inequalities are also equivalent for such vectors $x$ that $x=0$ or $|w|<\|z\|_{\infty}$.
\end{corollary}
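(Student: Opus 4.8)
The plan is to recognize that $P_0x$ is exactly a vector of the shape $x^1$ treated in Lemma~\ref{lemma:Lyap-transcript}, and then to chain that lemma with the useful equality~\eqref{eq:useful-eq}. Writing $x=\begin{bmatrix} z \\ w\end{bmatrix}$ as the vector $x^2$ of~\eqref{eq.aux1} and setting $z^1 := (A+cI_d)z+Bw$, the explicit formula $P_0x = \begin{bmatrix} (A+cI_d)z+Bw \\ 0\end{bmatrix}$ shows that $P_0x$ has the form $x^1$ (i.e.\ its last coordinate vanishes). Hence, for $1\le p<\infty$, Lemma~\ref{lemma:Lyap-transcript}\,(i) applied to $x^1=P_0x$ and $x^2=x$ gives
\[
\WP{P_0x}{x}_p\le 0 \quad\Longleftrightarrow\quad \WP{(A+cI_d)z+Bw}{z}_p\le 0 .
\]

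Next I would remove the shift $cI_d$ via the useful equality~\eqref{eq:useful-eq}, applied with first argument $Az+Bw$, second argument $z$, and scalar $c$:
\[
\WP{(A+cI_d)z+Bw}{z}_p = \WP{(Az+Bw)+cz}{z}_p = \WP{Az+Bw}{z}_p + c\norm{z}{p}^2 .
\]
Thus $\WP{(A+cI_d)z+Bw}{z}_p\le 0$ is equivalent to $\WP{Az+Bw}{z}_p\le -c\norm{z}{p}^2$, and combining this with the previous display yields the claimed equivalence for every $p\in[1,\infty)$.

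For $p=\infty$ the argument is identical except that Lemma~\ref{lemma:Lyap-transcript}\,(i) is unavailable; instead I would invoke Lemma~\ref{lemma:Lyap-transcript}\,(ii), which under the hypothesis $|w|<\norm{z}{\infty}$ (or $w=\norm{z}{\infty}=0$, i.e.\ $x=0$) gives the exact identity $\WP{P_0x}{x}_\infty=\WP{(A+cI_d)z+Bw}{z}_\infty$, so that the two inequalities again hold or fail simultaneously; the shift is once more handled by~\eqref{eq:useful-eq}, and in the degenerate case $x=0$ both inequalities reduce to $0\le 0$. I do not expect a genuine obstacle here: the statement is essentially a bookkeeping consequence of the two preceding lemmas, the only delicate point being that for $p=\infty$ one must restrict to $x$ with $|w|<\norm{z}{\infty}$ so that the index $d+1$ stays out of $I_\infty(x)$ — which is precisely the hypothesis supplied in the statement.
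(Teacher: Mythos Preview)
Your proof is correct and follows essentially the same approach as the paper: both invoke the useful equality~\eqref{eq:useful-eq} to rewrite $\WP{Az+Bw}{z}_p\le -c\|z\|_p^2$ as $\WP{(A+cI_d)z+Bw}{z}_p\le 0$, and then apply Lemma~\ref{lemma:Lyap-transcript} with $z^1=(A+cI_d)z+Bw$, $z^2=z$, $w^2=w$. The only cosmetic difference is that you apply the lemma first and the shift second, whereas the paper does it in the opposite order.
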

\begin{proof}
In view of~\eqref{eq:useful-eq}, the inequality $\WP{Az+Bw}{z}_p\leq -c\|z\|^2_p$ can be written as $\WP{(A+cI)z+Bw}{z}_p\leq 0$. The statement is now obvious from
Lemma~\ref{lemma:Lyap-transcript}, applied to $z^1=(A+cI)z+Bw$, $z^2=z$ and $w^2=w$.
\end{proof}

\subsection{Sufficient conditions for stability}
We are now ready to give a sufficient condition for
Problem~\ref{prob:stability} in the special situation where
$\WP{\cdot}{\cdot}=\WP{\cdot}{\cdot}_{p}$.

\begin{theorem}\label{thm.th-lp}
[Sufficient conditions for
    Problem~\ref{prob:stability} and special cases]\label{prop:lurie1}
    Consider a Lur'e system with sector constraint defined by $(A,B,C,\varkappa)$.
  Fix a norm $\ell_p$, $p\in[1,\infty]$ with log norm $\mu_p$ and weak
  pairing $\WP{\cdot}{\cdot}_{p}$. If $p=\infty$, assume also that
  $\varkappa\|C\|_{\infty}<1$.  Then the following statements hold:
  \begin{enumerate}
  \item the Lyapunov inequality~\eqref{eq.lyap} holds if
    \begin{equation}\label{prob:primal:Lure}
      \WP{P_0x}{x}_p \leq 0 \quad \text{for all }
      x\in\real^{d+1} \text{ such that } 
      \WP{P_1x}{x}_p \leq0;
    \end{equation}
furthermore,~\eqref{eq.lyap} and~\eqref{prob:primal:Lure} are equivalent when $p<\infty$;
  \item the Lyapunov inequality~\eqref{eq.lyap} holds if
    \begin{equation}\label{eq.lurie-sufficient}
      \exists\tau\geq0\;\text{such that}\;
      \mu_p(P(\tau))\leq 0\; \text{with}\;
      P(\tau)=\begin{pmatrix}
        A+cI_d & B\\
        \tau C & -\tau\varkappa^{-1}
      \end{pmatrix} 
    \end{equation}

  \item if $p=2$, then condition~\eqref{eq.lurie-sufficient} is not only
    sufficient but also necessary for the Lyapunov inequality~\eqref{eq.lyap};

  \item if $p=1$, $A$ is Metzler, and $B,C$ are non-negative, then
    condition~\eqref{eq.lurie-sufficient} is not only sufficient but also
    necessary for the Lyapunov inequality~\eqref{eq.lyap}.
\end{enumerate}
\end{theorem}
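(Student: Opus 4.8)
The plan is to prove (i)--(iv) in sequence, each reducing to a result already established in the paper. The engine of (i) is the pair of transcription results. For $p\in[1,\infty)$, Lemma~\ref{lemma:constr-transcript} identifies the sector-constraint set $\{(z,w): w(\varkappa^{-1}w-Cz)\le 0\}$ with $\{x:\WP{P_1x}{x}_p\le 0\}$, while Corollary~\ref{cor:Lyap-transcript} identifies $\WP{Az+Bw}{z}_p\le -c\norm{z}{p}^2$ with $\WP{P_0x}{x}_p\le 0$; substituting both turns \eqref{eq.lyap} verbatim into \eqref{prob:primal:Lure}, which also proves the equivalence claimed in the second half of (i). For $p=\infty$ only the implication \eqref{prob:primal:Lure}$\Rightarrow$\eqref{eq.lyap} is asserted, and this is where the extra hypothesis $\varkappa\norm{C}{\infty}<1$ is used: given $(z,w)$ with $w(\varkappa^{-1}w-Cz)\le 0$, either $z=0$, in which case $\varkappa^{-1}w^2\le 0$ forces $w=0$ and \eqref{eq.lyap} reads $0\le 0$; or $z\neq 0$, in which case the constraint forces $|w|\le\varkappa|Cz|\le\varkappa\norm{C}{\infty}\norm{z}{\infty}<\norm{z}{\infty}$. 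On this regime Lemma~\ref{lemma:constr-transcript} gives $\WP{P_1x}{x}_\infty\le 0$, so \eqref{prob:primal:Lure} yields $\WP{P_0x}{x}_\infty\le 0$, and since $|w|<\norm{z}{\infty}$ Corollary~\ref{cor:Lyap-transcript} turns this into $\WP{Az+Bw}{z}_\infty\le -c\norm{z}{\infty}^2$, i.e.\ \eqref{eq.lyap} for this $x$.

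Statement (ii) is immediate from the non-polynomial S-Lemma: since $P(\tau)=P_0-\tau P_1$, the hypothesis $\mu_p(P(\tau))\le 0$ for some $\tau\ge 0$ forces the dual value $\beta$ of the problem pair \eqref{prob:primal}--\eqref{prob:dual} (taken with $s=1$, $\rho_1=0$, $\WP{\cdot}{\cdot}=\WP{\cdot}{\cdot}_p$) to satisfy $\beta\le\mu_p(P(\tau))\le 0$; then Theorem~\ref{lemma:duality}, statement~\ref{l:d:3} (inequality~\eqref{eq.S-proc-convenient}), gives $\WP{P_0x}{x}_p\le\beta\norm{x}{p}^2\le 0$ whenever $\WP{P_1x}{x}_p\le 0$, which is exactly \eqref{prob:primal:Lure}, and statement (i) concludes.

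For (iii) and (iv) the substance is the absence of a duality gap. By statement (i), for $p\in\{1,2\}$ the Lyapunov inequality~\eqref{eq.lyap} is equivalent to \eqref{prob:primal:Lure}, hence, by homogeneity of the $2$-forms $x\mapsto\WP{P_ix}{x}_p$, to $\alpha\le 0$, where $\alpha$ is the primal supremum in~\eqref{prob:primal} (the feasible set is nonempty, since any normalized $x=(z^\top,0)^\top$ with $z\neq 0$ gives $\WP{P_1x}{x}_p=0$). It remains to show that $\alpha\le 0$ forces $\beta:=\inf_{\tau\ge 0}\mu_p(P(\tau))\le 0$ \emph{with the infimum attained}. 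For $p=2$ the weak pairing is the Euclidean inner product, and the strict inequality $\WP{P_1x}{x}_2<0$ is feasible (pick $z$ with $Cz=1$ and a small $w>0$, so that $w(\varkappa^{-1}w-Cz)<0$), so the Yakubovich S-Lemma (Lemma~\ref{lem.yakub}) applies and yields $\alpha=\beta$ with both extrema attained, hence \eqref{eq.lurie-sufficient}. For $p=1$ with $A$ Metzler and $B,C\ge 0$, the block form~\eqref{eq.matrices1} shows at once that $P_0$ and $-P_1$ are Metzler, and the strict constraints~\eqref{prob:primal++} are feasible since for $x=(z^\top,w)^\top>0$ with $\norm{x}{1}=1$ one computes $\WP{P_1x}{x}_1=\vectorones^\top P_1x=\varkappa^{-1}w-Cz$, which is negative for small $w$ (provided $C\neq 0$, so that $Cz>0$ when $z>0$); Lemma~\ref{lem.nogap-positive} then yields $\alpha=\beta$ with $\beta$ attained, hence \eqref{eq.lurie-sufficient}. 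The sufficiency half of (iii) and (iv) is already contained in (ii).

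Most of the proof is routine bookkeeping with the transcription results; two points deserve care. First, the $p=\infty$ step of (i): the transcriptions there are only one-sided, and the hypothesis $\varkappa\norm{C}{\infty}<1$ is precisely what confines $(z,w)$ to the region $|w|<\norm{z}{\infty}$ on which Corollary~\ref{cor:Lyap-transcript} becomes an equivalence. Without this hypothesis the reduction to \eqref{prob:primal:Lure} fails. Second, the no-gap steps in (iii) and (iv): one must verify the constraint qualifications of Lemma~\ref{lem.yakub} and Lemma~\ref{lem.nogap-positive}, which hold once the feedback loop is non-degenerate, i.e.\ $C\neq 0$ (when $C=0$ the nonlinearity is decoupled and the claims follow directly from the block-triangular structure of $P(\tau)$), and, crucially, one must use that the dual optimum is \emph{attained}, so that the inequality $\beta\le 0$ can be promoted to the existence of an explicit multiplier $\tau_*\ge 0$ with $\mu_p(P(\tau_*))\le 0$. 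I expect this attainment-and-qualification bookkeeping, together with the $p=\infty$ geometric estimate, to be the main obstacle.
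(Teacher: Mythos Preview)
Your proposal is correct and follows essentially the same route as the paper: statement~(i) via Lemma~\ref{lemma:constr-transcript} and Corollary~\ref{cor:Lyap-transcript} (with the $\varkappa\norm{C}{\infty}<1$ hypothesis forcing $|w|<\norm{z}{\infty}$ in the $p=\infty$ case), statement~(ii) via Theorem~\ref{lemma:duality}\ref{l:d:3}, and statements~(iii)--(iv) via the zero-gap Lemmas~\ref{lem.yakub} and~\ref{lem.nogap-positive} respectively. Your treatment is in fact slightly more careful than the paper's, which does not explicitly verify the constraint qualifications (strict feasibility of $\WP{P_1x}{x}<0$ for Lemma~\ref{lem.yakub}, and feasibility of the strict constraints~\eqref{prob:primal++} for Lemma~\ref{lem.nogap-positive}) or discuss the degenerate case $C=0$.
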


Notice that~\eqref{eq.lurie-sufficient} with $p=\infty$ in principle can
hold only when $\varkappa\|C\|_{\infty}\leq 1$. In view of this, the assumption
$\varkappa\|C\|_{\infty}<1$ is not very restrictive.
\begin{proof}

Statement (i) in the case where $p\in[1,\infty)$ is straightforward from Lemma~\ref{lemma:constr-transcript} and Corollary~\ref{cor:Lyap-transcript}.
To prove it for $p=\infty$, one has to use the assumption $\varkappa\|C\|_{\infty}<1$ entailing that $|w|\leq\varkappa|Cz|\leq \varkappa\|C\|_{\infty}\|z\|_{\infty}$, that is, $|w|<\|z\|_{\infty}$ unless $w=\|z\|=0$. Corollary~\ref{cor:Lyap-transcript} entails that the first inequality in~\eqref{eq.lyap} is
  also equivalent to $\WP{P_0x}{x}_{\infty}\leq 0$. The second inequality in~\eqref{eq.lyap} also implies the relation $\WP{P_1x}{x}_{\infty}\leq 0$
  (Lemma~\ref{lemma:constr-transcript}). Hence,~\eqref{prob:primal:Lure} is sufficient (yet not necessary) for~\eqref{eq.lyap} also for $p=\infty$.

Statement $(ii)$ is now straightforward from Theorem~\ref{lemma:duality}.

Statements $(iii)$ and $(iv)$ follow, respectively, from Lemmas~\ref{lem.yakub} and~\ref{lem.nogap-positive} (one can easily notice that $P_0$ and $(-P_1)$ are Metzler matrices).
\end{proof}

Theorem~\ref{prop:lurie1} can be generalized to the weighted norm
case. Introducing a new variable $\tilde z=Rz$, one easily shows
that~\eqref{eq.lyap} holds for $\WP{\cdot}{\cdot}=\WP{\cdot}{\cdot}_{p,R}$,
where $R\in\real^{d\times d}$ is an invertible matrix, if and only if it
holds for $\WP{\cdot}{\cdot}=\WP{\cdot}{\cdot}_{p}$ and matrices $\tilde
A=RAR^{-1}$, $\tilde B=RB$, $\tilde C=CR^{-1}$. Introducing the
block-diagonal matrix $\tilde R={\rm diag}(R,1)$, one obtains the following
corollary.
\begin{corollary}\label{cor.lurie2}
Suppose that $\WP{\cdot}{\cdot}=\WP{\cdot}{\cdot}_{p,R}$, where $1\leq p\leq\infty$ and, if $p=\infty$ one has $\varkappa\|CR^{-1}\|_{\infty}<1$. Then~\eqref{eq.lyap} holds if $\tau\geq 0$ exists such that
\begin{equation}\label{eq.lurie-sufficient-R}
\mu_{p,\tilde R}(P(\tau))=\mu_p\begin{pmatrix}
RAR^{-1}+cI_d & RB\\
\tau CR^{-1} & -\tau\varkappa^{-1}
\end{pmatrix}\leq 0.
\end{equation}
If $p=2$,~\eqref{eq.lurie-sufficient-R} is also necessary for~\eqref{eq.lyap}; the same holds when $p=1$, $RAR^{-1}$ is a Metzler matrix and matrices $RB,CR^{-1}$ are nonnegative.
\end{corollary}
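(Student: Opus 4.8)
The plan is to reduce the weighted-norm statement entirely to the unweighted Theorem~\ref{prop:lurie1} by the linear change of coordinates already announced in the paragraph preceding the corollary. First I would fix the defining identity for the weighted weak pairing, namely that $\WP{\cdot}{\cdot}_{p,R}$ is the pairing compatible with $\norm{x}{p,R}=\norm{Rx}{p}$ and satisfies $\WP{x}{y}_{p,R}=\WP{Rx}{Ry}_p$ for all $x,y\in\real^d$; being the image of $\WP{\cdot}{\cdot}_p$ under the invertible map $R$, it inherits Lumer's equality and the curve norm derivative formula from $\WP{\cdot}{\cdot}_p$ (the matrix-level shadow of this is precisely $\mu_{p,R}(A)=\mu_p(RAR^{-1})$, already recorded in the excerpt). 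Setting $\tilde z=Rz$, $\tilde A=RAR^{-1}$, $\tilde B=RB$, $\tilde C=CR^{-1}$, one has $Cz=\tilde C\tilde z$, $\norm{z}{p,R}=\norm{\tilde z}{p}$, and $\WP{Az+Bw}{z}_{p,R}=\WP{R(Az+Bw)}{Rz}_p=\WP{\tilde A\tilde z+\tilde Bw}{\tilde z}_p$, so that the Lyapunov inequality~\eqref{eq.lyap} for the data $(A,B,C,\varkappa)$ and pairing $\WP{\cdot}{\cdot}_{p,R}$ is, under $z\mapsto\tilde z$, equivalent to~\eqref{eq.lyap} for the data $(\tilde A,\tilde B,\tilde C,\varkappa)$ and the unweighted pairing $\WP{\cdot}{\cdot}_p$. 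When $p=\infty$ the hypothesis $\varkappa\norm{CR^{-1}}{\infty}<1$ is exactly the hypothesis $\varkappa\norm{\tilde C}{\infty}<1$ required by Theorem~\ref{prop:lurie1}.

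Next I would apply Theorem~\ref{prop:lurie1} to the transformed system. Its sufficient condition (statement (ii)) reads: there exists $\tau\geq0$ with $\mu_p(\tilde P(\tau))\leq0$, where $\tilde P(\tau)=\begin{pmatrix}\tilde A+cI_d & \tilde B\\ \tau\tilde C & -\tau\varkappa^{-1}\end{pmatrix}$. It then remains only to identify $\mu_p(\tilde P(\tau))$ with $\mu_{p,\tilde R}(P(\tau))$ for the block-diagonal weight $\tilde R=\operatorname{diag}(R,1)$ and $P(\tau)=\begin{pmatrix}A+cI_d & B\\ \tau C & -\tau\varkappa^{-1}\end{pmatrix}$. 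This is a one-line block computation: $\tilde R\,P(\tau)\,\tilde R^{-1}=\begin{pmatrix}R(A+cI_d)R^{-1} & RB\\ \tau CR^{-1} & -\tau\varkappa^{-1}\end{pmatrix}=\tilde P(\tau)$, hence $\mu_{p,\tilde R}(P(\tau))=\mu_p(\tilde R\,P(\tau)\,\tilde R^{-1})=\mu_p(\tilde P(\tau))$, which is exactly the matrix displayed in~\eqref{eq.lurie-sufficient-R}. Together with the coordinate equivalence of the first paragraph this proves the sufficiency claim, and notably bypasses any need to revisit Lemma~\ref{lemma:constr-transcript} or Corollary~\ref{cor:Lyap-transcript} directly.

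For the necessity claims I would invoke the sharper parts of Theorem~\ref{prop:lurie1} for the transformed data. When $p=2$, statement (iii) says that the existence of $\tau\geq0$ with $\mu_2(\tilde P(\tau))\leq0$ is necessary (not merely sufficient) for~\eqref{eq.lyap} in the $\tilde z$ coordinates, and by the equivalence of the first paragraph this is necessary for~\eqref{eq.lyap} in the original coordinates; the identity $\mu_{2,\tilde R}(P(\tau))=\mu_2(\tilde P(\tau))$ converts it into the stated form. When $p=1$, statement (iv) yields the same conclusion under the hypotheses that $\tilde A=RAR^{-1}$ is Metzler and $\tilde B=RB$, $\tilde C=CR^{-1}$ are nonnegative — which are precisely the hypotheses assumed in the corollary. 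The only place where any care is genuinely needed is the very first step: pinning down that $\WP{\cdot}{\cdot}_{p,R}$ satisfies $\WP{x}{y}_{p,R}=\WP{Rx}{Ry}_p$ and that Lumer's equality together with the curve norm derivative formula transport across the change of variables; once that is in place, everything else is a mechanical substitution of $(\tilde A,\tilde B,\tilde C,\tilde R)$ into results already proven.
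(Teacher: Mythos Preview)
Your proposal is correct and follows precisely the approach the paper sketches in the paragraph preceding the corollary: reduce to the unweighted case via $\tilde z=Rz$, apply Theorem~\ref{prop:lurie1} to $(\tilde A,\tilde B,\tilde C)=(RAR^{-1},RB,CR^{-1})$, and recognize the resulting condition as $\mu_{p,\tilde R}(P(\tau))\leq0$ with $\tilde R=\operatorname{diag}(R,1)$. Your treatment is in fact more explicit than the paper's, which leaves the verification to the reader.
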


\begin{remark}[Convergence and contractivity in a single criterion]
In view of Lemma~\ref{lem:stab-contr}, Theorem~\ref{prop:lurie1} and Corollary~\ref{cor.lurie2} give simultaneously a criterion of global exponential convergence for solutions that obey the sector constraint~\eqref{eq.sector+}
and a criterion of exponential contraction for solutions obeying the slope constraint~\eqref{eq.slope+}.
\end{remark}

We would like to notice that, for fixed $p\in [1,\infty]$ and fixed invertible matrix $R$, the feasibility of~\eqref{eq.lurie-sufficient-R} reduces to a \emph{convex program}
\begin{equation}\label{eq.convex}
\begin{gathered}
\inf_{c,\tau}\quad\mu_{p,R} 
      \begin{pmatrix}
        A+cI_d & B\\
        \tau C & -\tau\varkappa^{-1}
      \end{pmatrix},\\
      \text{subject to: $c>0,\tau\geq 0$.}
    \end{gathered}
\end{equation}
The condition~\eqref{eq.lurie-sufficient-R} holds if and only if the answer is non-positive.
Unfortunately, the logarithmic norm can be computed
efficiently only for $p=1,2,\infty$, where its computation in the case of general $p$ is a difficult operation.\footnote{Note that
  computation of the matrix operator norm $\|\cdot\|_p$ with $p\in
  (1,\infty)\setminus\{2\}$ (even up to a constant factor) proves to be an
  NP-hard problem~\cite{AB-AV:11}.} Regarding $R$ as a parameter, the problem becomes in general
\emph{non-convex} as the function $\mu_{p,R}$ is non-convex in
$R$. However, when $p=2$ or when $p\in\{1,\infty\}$ and $R$ is diagonal,
one can show that the problem is quasiconvex in $R$ (after a
transcription), at fixed $c$ and $\tau$.  Therefore, the combined problem
of computing $R$ and searching for the contraction factor $c$ can be solved
via a bisection algorithm. A discussion of the $p=2$ case is given in the
classic text on LMIs in control~\cite[Section~5.1]{SB-LEG-EF-VB:94} and a
discussion of the $p\in\{1,\infty\}$ for the case of diagonal weights $R$
is given in~\cite[Section~2.7]{FB:22-CTDS}. We discuss both cases in some
detail below.

\subsection{The $\ell_2$ norm case: comparison with known results}

In the case $p=2$, Theorem~\ref{thm.th-lp} and Corollary~\ref{cor.lurie2} lead to several known results.

\subsubsection{Analytic form of the condition~\eqref{eq.lurie-sufficient}}

Recall that for an arbitrary square matrix $M$ one has $\mu_2(M)=\lambda_{\max}(M^s)$, where $M^s=(M+M^{\top})/2$ is the symmetric part of $M$. In other words,
$\mu_2(M)\leq 0$ (respectively, $<0$) if and only if $M^s\preceq 0 $ (respectively, $M^s\prec 0$).
This allows us to simplify the condition~\eqref{eq.lurie-sufficient}. It can be easily shown that~\eqref{eq.lurie-sufficient} (with $p=2$) cannot hold with $\tau=0$, except for the trivial situation where $B=0$. For this reason, we are interested only in $\tau>0$.
\begin{corollary}\label{cor.p2-simplification}
Inequality~\eqref{eq.lyap} with $p=2$, $\varkappa<\infty$, $\tau>0$ holds if and only if
\begin{equation}\label{eq.p2-simplification}
  \exists \tau>0: A^s+cI+\frac{\varkappa}{4\tau}(B+\tau C^{\top})(B^{\top}+\tau C)\preceq 0.
\end{equation}
\end{corollary}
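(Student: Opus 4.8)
The plan is to rewrite the Lyapunov inequality as a single linear matrix inequality parametrized by $\tau$ and then eliminate the last coordinate by a Schur-complement argument. First I would invoke the third statement of Theorem~\ref{thm.th-lp}: for $p=2$ the Lyapunov inequality~\eqref{eq.lyap} is equivalent to condition~\eqref{eq.lurie-sufficient}, i.e.\ to the existence of $\tau\ge 0$ with $\mu_2(P(\tau))\le 0$. As already noted just before the corollary, for $p=2$ no $\tau=0$ can work unless $B=0$, so it suffices to search over $\tau>0$. Using the identity $\mu_2(M)=\lambda_{\max}(M^s)$ recalled above, the inequality $\mu_2(P(\tau))\le 0$ is precisely $P(\tau)^s\preceq 0$.

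Next I would compute the symmetric part explicitly,
\[
P(\tau)^s=\frac12\bigl(P(\tau)+P(\tau)^\top\bigr)=\begin{pmatrix} A^s+cI_d & \tfrac12(B+\tau C^{\top})\\ \tfrac12(B^{\top}+\tau C) & -\tau\varkappa^{-1}\end{pmatrix},
\]
a symmetric $(d+1)\times(d+1)$ matrix whose trailing $1\times1$ block is the scalar $-\tau\varkappa^{-1}$. Since $\tau>0$ and $0<\varkappa<\infty$, this scalar is \emph{strictly} negative, so the Schur-complement lemma applies in its non-strict two-sided form (no range or consistency condition is needed): $P(\tau)^s\preceq 0$ if and only if the Schur complement of the $(2,2)$ block is negative semidefinite, i.e.
\[
(A^s+cI_d)-\tfrac12(B+\tau C^{\top})\bigl(-\tau\varkappa^{-1}\bigr)^{-1}\tfrac12(B^{\top}+\tau C)=A^s+cI_d+\frac{\varkappa}{4\tau}(B+\tau C^{\top})(B^{\top}+\tau C)\preceq 0 ,
\]
which, after taking the union over $\tau>0$, is exactly~\eqref{eq.p2-simplification}. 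Chaining these equivalences in both directions finishes the argument: if~\eqref{eq.p2-simplification} holds for some $\tau>0$, then $\mu_2(P(\tau))\le 0$ and hence~\eqref{eq.lyap} (this direction already follows from the second statement of Theorem~\ref{thm.th-lp}); conversely,~\eqref{eq.lyap} forces a feasible $\tau$ in~\eqref{eq.lurie-sufficient}, necessarily positive, for which the same Schur-complement identity returns~\eqref{eq.p2-simplification}.

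I do not expect a genuine obstacle; the remaining work is bookkeeping. Two points need care: (a) the transpose conventions in $P(\tau)^s$, where the off-diagonal block $\tfrac12(B+\tau C^{\top})$ is assembled from the $(1,2)$ entry $B$ of $P(\tau)$ and from the transpose of its $(2,1)$ entry $\tau C$; and (b) the fact that the clean two-sided equivalence hinges on $\varkappa<\infty$ together with $\tau>0$ making the pivot $-\tau\varkappa^{-1}$ strictly negative --- for $\varkappa=\infty$ the pivot degenerates to $0$ and the Schur reduction is no longer available, which is consistent with the corollary being stated only for $\varkappa<\infty$, $\tau>0$.
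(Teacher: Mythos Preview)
Your proposal is correct and follows essentially the same approach as the paper: both compute $P(\tau)^s$ explicitly and apply the Schur-complement criterion with respect to the strictly negative $(2,2)$ entry $-\tau\varkappa^{-1}$. You are slightly more explicit than the paper in invoking Theorem~\ref{thm.th-lp}\ref{l:d:3} to pass from~\eqref{eq.lyap} to~\eqref{eq.lurie-sufficient}, whereas the paper's proof tacitly identifies the two and only spells out the Schur step; otherwise the arguments coincide.
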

\begin{proof}
The left-hand side of~\eqref{eq.p2-simplification} is nothing else than the Schur complement of the bottom-right diagonal entry in the matrix
\[P(\tau)^s=
\begin{bmatrix}
A^s+cI & \frac{1}{2}(B+\tau C^{\top})\\
* & -\tau\varkappa^{-1}
\end{bmatrix}.\]
Since $\tau\varkappa^{-1}<0$, the Haynsworth~\cite{EVH:68} theorem implies that $P(\tau)^s\preceq 0$ (i.e.,~\eqref{eq.lurie-sufficient} holds with $p=2$) if and only if this Schur complement is negative semidefinite.
\end{proof}

In stability and contraction problems, one is primarily interested in the \emph{existence} of such a convergence/contractivity rate $c>0$ that~\eqref{eq.p2-simplification} holds, that is, in the validity of the strict inequality as follows
\begin{equation}\label{eq.p2-simplification+}
  \exists \tau>0: A^s+\frac{\varkappa}{4\tau}(B+\tau C^{\top})(B^{\top}+\tau C)\prec 0.
\end{equation}
Notice that~\eqref{eq.p2-simplification} can be rewritten as
\begin{equation}\label{eq.aux}
\begin{gathered}
\exists \tau>0:\quad\mathcal{A}(\varkappa)+\frac{\varkappa}{4\tau}(B-\tau C^{\top})(B^{\top}-\tau C)\prec 0,\\
\mathcal{A}(\varkappa):=A^s+\varkappa(BC)^s.
\end{gathered}
\end{equation}
In particular,~\eqref{eq.p2-simplification} entails that $\mathcal{A}(\varkappa)\prec 0 $. This condition, however, is only necessary yet not sufficient for~\eqref{cor.p2-simplification}. A necessary and sufficient condition is provided by the following lemma, relating the result of our Theorem~\ref{thm.th-lp}
and~\cite[Theorems~1 and~3]{RD-SD:18}.


\begin{lemma}\label{lem.p2}
The following statements are equivalent:
\begin{enumerate}
  \item\label{l:p2:1} condition~\eqref{eq.p2-simplification+} (equivalently,~\eqref{eq.aux}) holds;
  \item\label{l:p2:2} the family of inequalities~\eqref{eq.drummond} is valid
  \begin{equation}\label{eq.drummond}
  \mathcal{A}(\kappa)\prec 0 \quad\forall\kappa\in[0,\varkappa].
  \end{equation}
  \item\label{l:p2:3} $\mathcal{A}(\varkappa)\prec 0 $ and the matrix $S=(-\mathcal{A}(\varkappa))^{-1/2}\succ 0 $ satisfies the condition
  \begin{equation}\label{eq.drummond1}
  \|\tilde B\|\|\tilde C\|-\tilde C\tilde B\leq 2\varkappa^{-1},\quad
  \tilde B:=SB,\,\tilde C:=CS.
  \end{equation}
\end{enumerate}
\end{lemma}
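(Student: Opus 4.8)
The plan is to prove the three-way equivalence by establishing $\ref{l:p2:1}\Rightarrow\ref{l:p2:2}\Rightarrow\ref{l:p2:3}\Rightarrow\ref{l:p2:1}$. The core algebraic fact underlying everything is the identity
\[
\mathcal{A}(\kappa) = \mathcal{A}(\varkappa) - (\varkappa-\kappa)(BC)^s
\quad\text{together with}\quad
\mathcal{A}(\varkappa)+\tfrac{\varkappa}{4\tau}(B-\tau C^{\top})(B^{\top}-\tau C)
= \mathcal{A}(0) + \tfrac{\varkappa}{4}\big(\tfrac{1}{\tau}BB^{\top} + \tau C^{\top}C\big)+\tfrac{\varkappa}{2}(BC)^s - \tfrac{\varkappa}{2}(BC)^s,
\]
so that expanding the rank-two correction in \eqref{eq.aux} and using $A^s=\mathcal{A}(0)$ shows \eqref{eq.aux} is literally the same inequality as \eqref{eq.p2-simplification+}; the author already asserts this, so I may take it for granted. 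The genuine work is relating the ``exists $\tau>0$'' quantifier to the interval condition \eqref{eq.drummond}.

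For $\ref{l:p2:1}\Rightarrow\ref{l:p2:2}$: suppose \eqref{eq.aux} holds for some $\tau_*>0$. Since $(B-\tau_* C^{\top})(B^{\top}-\tau_* C)\succeq 0$, the inequality immediately gives $\mathcal{A}(\varkappa)\prec 0$, i.e.\ the endpoint $\kappa=\varkappa$. For the endpoint $\kappa=0$, note $A^s = \mathcal{A}(0)$ and that $\mathcal{A}(0)\preceq \mathcal{A}(\varkappa)+\tfrac{\varkappa}{4\tau_*}(B-\tau_*C^{\top})(B^{\top}-\tau_*C)\prec 0$ after reorganizing the cross terms (this uses $\tfrac{\varkappa}{4}(\tfrac1{\tau_*}BB^\top+\tau_* C^\top C)\pm\tfrac\varkappa2(BC)^s\succeq 0$ by completing the square). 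For an interior $\kappa\in(0,\varkappa)$, I would exploit convexity of the family: $\mathcal{A}(\kappa)$ is an affine function of $\kappa$, so $\mathcal{A}(\kappa)\preceq\tfrac{\varkappa-\kappa}{\varkappa}\mathcal{A}(0)+\tfrac{\kappa}{\varkappa}\mathcal{A}(\varkappa)$ — actually with equality since it is affine — hence a strict convex combination of two negative definite matrices, which is negative definite. So \eqref{eq.drummond} follows; this direction is easy.

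The harder direction is $\ref{l:p2:2}\Rightarrow\ref{l:p2:3}$ and $\ref{l:p2:3}\Rightarrow\ref{l:p2:1}$, where the specific change of variables $\tilde B=SB$, $\tilde C=CS$ with $S=(-\mathcal{A}(\varkappa))^{-1/2}$ enters. Assuming \eqref{eq.drummond}, in particular $\mathcal{A}(\varkappa)\prec0$ so $S$ is well-defined and positive definite; conjugating \eqref{eq.aux} by $S$ and using $S\mathcal{A}(\varkappa)S = -I_d$, the inequality \eqref{eq.aux} becomes $-I_d + \tfrac{\varkappa}{4\tau}(\tilde B-\tau\tilde C^{\top})(\tilde B^{\top}-\tau\tilde C)\prec 0$, i.e.\ $\tfrac{\varkappa}{4\tau}(\tilde B-\tau\tilde C^{\top})(\tilde B^{\top}-\tau\tilde C)\prec I_d$. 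Since the left side has rank at most one (it is $uu^{\top}$ up to scaling, $u=\tfrac{\sqrt\varkappa}{2\sqrt\tau}(\tilde B-\tau\tilde C^{\top})$), this strict inequality is equivalent to the scalar condition $\|u\|_2^2<1$, i.e.\ $\tfrac{\varkappa}{4\tau}\|\tilde B-\tau\tilde C^{\top}\|_2^2<1$, equivalently $\varkappa\|\tilde B\|_2^2 - 2\varkappa\tau\tilde C\tilde B + \varkappa\tau^2\|\tilde C\|_2^2 < 4\tau$. Dividing by $\tau$ and minimizing the left side over $\tau>0$ (the minimizer is $\tau = \|\tilde B\|_2/\|\tilde C\|_2$, giving minimum $2\varkappa\|\tilde B\|_2\|\tilde C\|_2 - 2\varkappa\tilde C\tilde B$), the existence of a feasible $\tau>0$ is exactly $2\varkappa(\|\tilde B\|_2\|\tilde C\|_2 - \tilde C\tilde B) < 4$, which rearranges to \eqref{eq.drummond1}. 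One subtlety: I must handle the degenerate cases $\tilde B=0$ or $\tilde C=0$ separately (then the scalar condition holds for suitable $\tau$ automatically, and \eqref{eq.drummond1} reads $0\le 2\varkappa^{-1}$), and I should double-check the direction of the equivalence between the matrix inequality $vv^\top\prec I$ and $\|v\|_2^2<1$ is clean. Running this chain backward gives $\ref{l:p2:3}\Rightarrow\ref{l:p2:1}$, and \eqref{eq.drummond} $\Rightarrow$ $\mathcal{A}(\varkappa)\prec0$ is part of the hypothesis, so the only remaining gap in $\ref{l:p2:2}\Rightarrow\ref{l:p2:3}$ is extracting \eqref{eq.drummond1} from the full interval condition rather than just the endpoints. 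I expect the main obstacle to be showing that \eqref{eq.drummond} for \emph{all} $\kappa\in[0,\varkappa]$ — not merely the endpoints — is what forces $\|\tilde B\|_2\|\tilde C\|_2-\tilde C\tilde B\le 2\varkappa^{-1}$: the intermediate $\kappa$ are exactly what rule out the rank-one perturbation from pushing an eigenvalue nonnegative, i.e.\ one must recognize that $\sup_{\kappa\in[0,\varkappa]}\lambda_{\max}(\mathcal{A}(\kappa))<0$ is equivalent, via the min-over-$\tau$ computation, to the quadratic-in-$\tau$ feasibility, and this equivalence (parametrizing the whole segment by the optimal multiplier) is the heart of the S-procedure losslessness for $s=1$ already invoked in Lemma~\ref{lem.yakub}.
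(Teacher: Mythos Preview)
Your implications $\ref{l:p2:1}\Rightarrow\ref{l:p2:2}$ (via affinity of $\mathcal A(\kappa)$ in $\kappa$) and $\ref{l:p2:3}\Rightarrow\ref{l:p2:1}$ (by minimizing $\tau\mapsto\frac{\varkappa}{4\tau}\|\tilde B-\tau\tilde C^\top\|_2^2$, with minimizer $\tau=\|\tilde B\|_2/\|\tilde C\|_2$) are correct and essentially match the paper. The paper's version of $\ref{l:p2:1}\Rightarrow\ref{l:p2:2}$ is a minor variant: it observes that~\eqref{eq.p2-simplification+} is monotone in the rate, hence holds with any $\kappa\le\varkappa$ in place of $\varkappa$, and then~\eqref{eq.aux} (with $\kappa$) yields $\mathcal A(\kappa)\prec0$.

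The genuine gap is $\ref{l:p2:2}\Rightarrow\ref{l:p2:3}$, which you explicitly leave open. The paper's route here is direct and does \emph{not} use the S-procedure: conjugating $\mathcal A(\kappa)\prec0$ by $S$ and substituting $k=\varkappa-\kappa$, condition~\eqref{eq.drummond} becomes $I_d+k(\tilde B\tilde C)^s\succ0$ for all $k\in[0,\varkappa]$, i.e.\ $\lambda_{\min}\big((\tilde B\tilde C)^s\big)\ge-\varkappa^{-1}$. The key observation you are missing is that $(\tilde B\tilde C)^s=\tfrac12(\tilde B\tilde C+\tilde C^\top\tilde B^\top)$ has rank at most two, and its nonzero eigenvalues can be written in closed form as $\tfrac12(\tilde C\tilde B\pm\|\tilde B\|_2\|\tilde C\|_2)$. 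The smaller of these equals $\tfrac12(\tilde C\tilde B-\|\tilde B\|_2\|\tilde C\|_2)\le0$, so the eigenvalue bound rearranges exactly to~\eqref{eq.drummond1}.

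Your S-procedure idea can be salvaged, but not as stated. Lemma~\ref{lem.yakub} gives only $(\text{Lyapunov inequality})\Leftrightarrow\ref{l:p2:1}$, so to use it you still need $\ref{l:p2:2}\Rightarrow(\text{Lyapunov inequality})$. This \emph{is} true in the scalar-input setting --- the sector constraint $w(\varkappa^{-1}w-Cz)\le0$ with $w\in\real$ forces either $w=0$ or $w=\kappa Cz$ for some $\kappa\in[0,\varkappa]$, so testing all linear gains already exhausts the sector, and compactness of $[0,\varkappa]$ then supplies a uniform $c>0$. But that is the substantive step, and you do not make it; merely pointing to Lemma~\ref{lem.yakub} is circular.
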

\begin{proof}
To show that \ref{l:p2:1} implies \ref{l:p2:2}, notice first that the validity of~\eqref{eq.p2-simplification} with rate $\varkappa$ entails its validity with
every smaller rate $\kappa\in[0,\varkappa]$. Using the decomposition~\eqref{eq.aux} (with $\kappa$ instead of $\varkappa$), one shows that $\mathcal{A}(\kappa)\prec 0 $.

To prove the implication \ref{l:p2:2}$\Longrightarrow$\ref{l:p2:3}, notice that inequalities~\eqref{eq.drummond} can be written as follows:
$S^{-2}+(\varkappa-\kappa) (BC)^s\succ 0 \,\forall \kappa\in [0,\varkappa]$ or, equivalently, $I+k(\tilde B\tilde C)^s\succ 0 \,\forall k\in [0,\varkappa]$. The latter condition can be formulated as follows: the minimal eigenvalue $\lambda_{\min}\left((\tilde B\tilde C)^s\right)\geq -\varkappa^{-1}$.
The eigenvalues of $(\tilde B\tilde C)^s$ are easy to compute: two of them equal
$
(\tilde C\tilde B\pm \|\tilde B\|\|\tilde C\|)/2
$
and the $(n-2)$ are zero (if $B$ and $C$ are parallel, then matrix has only one non-zero eigenvalue). Hence,~\eqref{eq.drummond} can be written
as follows: $\tilde C\tilde B-\|\tilde B\|\|\tilde C\|\geq -2\varkappa^{-1}$, which is equivalent to~\eqref{eq.drummond1}.

To prove the final implication \ref{l:p2:3}$\Longrightarrow$\ref{l:p2:1}, recall that $\mathcal{A}(\varkappa)=-S^2$. Hence~\eqref{eq.aux} can be equivalently written as follows: $\tau>0$ exists such that
\[
-I+\frac{\varkappa}{4\tau}(\tilde B-\tau\tilde C^{\top})(\tilde B^{\top}-\tau\tilde C)\prec 0,
\]
which holds if and only if $\|\tilde B-\tau\tilde C^{\top}\|^2<4\tau\varkappa^{-1}$. It can be easily checked that~\eqref{eq.drummond1} entails the latter inequality
with $\tau=\|\tilde B\|/\|\tilde C\|$. This finishes the proof.
\end{proof}

The inequalities~\eqref{eq.drummond} were introduced in~\cite{RD-SD:18} as
a condition for the absolute stability (in the case of nonlinearities with
sector condition) and the absolute contractivity (in the case of slope
condition). Lemma~\ref{lem.p2} shows that this condition is a special case
of the standard S-Lemma for the $\ell_2$-norm. Statement~\ref{l:p2:3} gives
an efficient way to test the condition~\eqref{eq.drummond}. Furthermore, the classical Yakubovich S-Lemma
(Lemma~\ref{lem.yakub}) shows that, in fact,~\eqref{eq.lurie-sufficient} is
\emph{necessary} for the Lyapunov property~\eqref{eq.lyap}, and
hence~\eqref{eq.drummond} are in fact minimal conditions under which absolute stability~\eqref{eq.abs-stab} (with sector constraint) and absolute contractivity~\eqref{eq.abs-contr} (with slope constraint) in $\ell_2$ norm can be established.

\subsubsection{The LMI stability criterion and KYP lemma}

In this subsection, we consider the standard reformulation of Corollary~\ref{cor.lurie2} in the case $p=2$.
Without loss of generality we may assume that $R$ in Corollary~\ref{cor.lurie2} is symmetric positive definite, and thus it can be written as $R=H^{1/2}$, where
$H=H^{\top}\succ 0$. Indeed, $\WP{x}{y}_{2,R}=\WP{R^{\top}Rx}{y}_2=\WP{x}{y}_{2,H^{1/2}}$, where
$H=R^{\top}R\succ 0$ and where $R$ is invertible). The inequality~\eqref{eq.lurie-sufficient-R} with $p=2$ is therefore equivalent to
\[
\begin{bmatrix}
RAR^{-1}+cI_d & RB\\
\tau CR^{-1} & -\tau\varkappa^{-1}
\end{bmatrix}+\begin{bmatrix}
RAR^{-1}+cI_d & RB\\
\tau CR^{-1} & -\tau\varkappa^{-1}
\end{bmatrix}^{\top}\leq 0,
\]
which, recalling that $R=H^{1/2}>0$, further simplifies to the condition~\citep{LDA-MC:13}
\begin{equation}\label{eq.kyp-lmi}
\begin{bmatrix}
HA+A^{\top}H+2cH & HB+\tau C\\
* & -\tau\varkappa^{-1}
\end{bmatrix}\leq 0.
\end{equation}
This condition, obviously, cannot hold for $\tau=0$ (except for the
degenerate case where $B=0$). If we are interested in the existence of some
$H\succ 0,c>0$, then we may assume, without loss of generality that $\tau=1$. If
the triple $(A,B,C)$ is controllable and observable, the
Kalman-Yakubovich-Popov lemma~\cite{AKG-GAL-VAY:04} implies that the
existence of $H=H^{\top}\succ 0$, $c>0$ obeying~\eqref{eq.kyp-lmi} is equivalent\footnote{The KYP lemma (relying only on the controllability and observability) guarantees, in fact, that the strict inequality in~\eqref{eq.kyp-lmi} holds for appropriate $c>0,H=H^{\top}$; formally, $H$ is not guaranteed to be positive definite. However, the strict inequality in~\eqref{eq.kyp-lmi} entails that $H\succ 0$ when $A$ is Hurwitz.} to the frequency-domain condition
\begin{equation}\label{eq.kyp}
\max_{\omega\in\R}\realpart\big(C(\imath\omega I-A)^{-1}B\big) < \varkappa^{-1}.
\end{equation}
This is a special case of the so-called ``circle criterion'' for absolute stability~\cite{AKG-GAL-VAY:04}, in which the circle degenerates to a half-plane on $\mathbb{C}$.

\subsection{A sufficient condition for the diagonally weighted $\ell_p$ norm}

This subsection offers a simple condition
ensuring~\eqref{eq.lurie-sufficient-R} for an arbitrary $p\in[1,\infty]$
and some \emph{diagonal} weight matrix $R$. This condition is inspired by
properties of Metzler matrices and their logarithmic norms. While this Metzler-based approach is
  potentially conservative for general triples of matrices $(A,B,C)$, we refer the reader to the discussion in the
  introduction about the advantages of non-Euclidean norms.

Recall that $\metzler{\cdot}$ stands for the Metzler majorant of a matrix
and $|\cdot|$ stands for the entry-wise absolute value; we also use
$\alpha(A)=\max\realpart{\lambda_j(A)}$ to denote the spectral abscissa of
matrix $A$. The following lemma is not widely known in the control literature, but
has been established in a sequence of paper on linear algebra,
see~\citep[Theorem~2]{JS-CW:62},~\citep{JA:96},
and~\citep[Lemma~3]{OP-MV:06}. We provide its proof for readers' convenience.
\begin{lemma}\label{lem.metzler}
For each $\varkappa<\infty$, consider the Metzler matrix $\mathfrak{A}(\varkappa)=\metzler{A}+\varkappa |B|\,|C|$.
The following two statements hold:
\begin{enumerate}
  \item\label{l:m:1} If $\alpha(\mathfrak{A}(\varkappa))<-c$ (that is, $\metzler{A}+\varkappa |B|\,|C|+cI$ is a Hurwitz matrix), then
  for \emph{every} $p\in[1,\infty],\tau>0$ there exists a diagonal matrix $R=R(p,\tau)\succ 0 $ such that~\eqref{eq.lurie-sufficient-R} holds and, in the case of $p=\infty$, $\varkappa\|CR^{-1}\|_{\infty}<1$.
  \item\label{l:m:2}On the other hand, if~\eqref{eq.lurie-sufficient-R} holds with some diagonal $R\succ 0$ and $p\in\{1,\infty\}$, then $\alpha(\mathfrak{A}(\varkappa))\leq -c'$, i.e., the condition from~\ref{l:m:1} holds for each $c'\in (0,c)$.
\end{enumerate}
\end{lemma}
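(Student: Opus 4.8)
The plan is to prove the two statements by exploiting the well-known fact that, for $p \in \{1,\infty\}$ (and, via a squeeze, for every $p \in [1,\infty]$), the $p$-log norm of a matrix is dominated by the $1$- or $\infty$-log norm of its Metzler majorant, together with the characterization of Hurwitz stability of a Metzler matrix through its Perron--Frobenius eigenvector. For statement~\ref{l:m:1}, I would start from the hypothesis $\alpha(\mathfrak{A}(\varkappa)) < -c$, which says that the Metzler matrix $M := \metzler{A} + \varkappa|B|\,|C| + cI_d$ is Hurwitz. By Perron--Frobenius theory for Metzler matrices (after an irreducibility reduction, or by a limiting argument adding $\varepsilon\mathbbold{1}\mathbbold{1}^\top$ and passing $\varepsilon \to 0^+$), there is a strictly positive vector $\eta > \vectorzeros[d]$ such that $M^\top \eta$ has all components negative, equivalently $\eta^\top \metzler{A} + \varkappa \eta^\top |B|\,|C| + c\eta^\top < \vectorzeros[d]^\top$ componentwise, with strict slack. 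Set $R = \operatorname{diag}(\eta)$ (for $p < \infty$ one may need $R = \operatorname{diag}(\eta)^{1/p}$-type scaling; the clean statement is that $R = \operatorname{diag}(\eta)$ works for $p=1$ and $R = \operatorname{diag}(\eta)^{-1}$-type for $p=\infty$, and I will check which exponent makes the row/column sums come out right). The key inequality is then $\mu_{p,R}(P(\tau)) = \mu_p\big(\tilde R P(\tau)\tilde R^{-1}\big) \le \mu_1\big(\lceil \tilde R P(\tau)\tilde R^{-1}\rceil_{\mathrm{Mzr}}\big)$ or its $\infty$-analogue, and the Metzler majorant of the scaled $P(\tau)$ has off-diagonal block $\tau|C|$ (or $\tau C^\top$ type entries bounded by $\tau|C|$) and $\tau|B|$, so its $\ell_1$ column sums are controlled by $\eta^\top\metzler{A} + c\eta^\top + (\text{terms from }\tau|B|,\tau|C|)$; choosing the last coordinate of the scaled vector appropriately (proportional to $1/\tau$ versus the $\varkappa$ weighting) makes the AM--GM combination $\varkappa^{-1}\tau^{-1} + \tau(\cdots)$ reproduce exactly the $\varkappa|B|\,|C|$ cross term, yielding $\mu_{p,R}(P(\tau)) < 0$. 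Intermediate $p$ follows since $\mu_p$ lies between $\mu_1$ and $\mu_\infty$ of the same matrix under a suitable common diagonal scaling, or by interpolation of the explicit row/column-sum formulas in Table~\ref{table:equivalences}.

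For statement~\ref{l:m:2}, I would argue in the reverse direction. Suppose~\eqref{eq.lurie-sufficient-R} holds with a diagonal $R = \operatorname{diag}(r) \succ 0$ and $p \in \{1,\infty\}$. Using the explicit $\mu_1$ (resp.\ $\mu_\infty$) formula as a maximum of weighted column (resp.\ row) sums of absolute values of off-diagonal entries plus diagonal entries, the condition $\mu_{p,R}(P(\tau)) \le 0$ translates into a finite system of scalar inequalities on the entries of $A$, $B$, $C$, the weights $r_i$, and the auxiliary last coordinate; these inequalities are, entrywise, at least as strong as $\big(\metzler{A} + \varkappa|B|\,|C| + c'I_d\big)$ having a strictly positive (sub-)eigenvector, because one may eliminate the last coordinate via the scalar inequality it satisfies and apply AM--GM in the other direction (the product of the two cross terms is at least $\varkappa|B_i||C_j|$). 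Hence the Metzler matrix $\metzler{A} + \varkappa|B|\,|C| + c'I_d$ admits a positive vector mapped to a nonpositive vector for every $c' < c$, which by the standard Metzler-stability criterion gives $\alpha(\mathfrak{A}(\varkappa)) \le -c'$.

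The main obstacle I anticipate is the bookkeeping in statement~\ref{l:m:1}: correctly splitting the auxiliary $(d{+}1)$-st coordinate of the scaling and balancing the two $\tau$-dependent cross terms so that their AM--GM combination is exactly $\varkappa|B|\,|C|$, uniformly over all $p \in [1,\infty]$ and all $\tau > 0$, and simultaneously arranging the strict inequality $\varkappa\|CR^{-1}\|_\infty < 1$ in the $p = \infty$ case. A secondary technical point is the irreducibility reduction needed to invoke Perron--Frobenius and extract a \emph{strictly} positive eigenvector; I would handle this by a standard perturbation $\mathfrak{A}(\varkappa) + \varepsilon(\mathbbold{1}\mathbbold{1}^\top - I_d)$, noting that Hurwitzness is open and the perturbed matrix is irreducible Metzler, then letting $\varepsilon \to 0^+$. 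Everything else is a direct computation using the log-norm formulas collected in Table~\ref{table:equivalences} and the inequality $\mu_p(\cdot) \le \mu_p(\lceil\cdot\rceil_{\mathrm{Mzr}})$ for $p \in \{1,\infty\}$.
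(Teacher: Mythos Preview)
For the endpoint cases $p\in\{1,\infty\}$ your plan is correct and, for statement~\ref{l:m:2}, actually more direct than the paper's argument (which routes through an $\varepsilon$-perturbation and a Schur-complement lemma for Metzler Hurwitz matrices). Spelling out the column-sum (resp.\ row-sum) inequalities encoded by $\mu_{1,\tilde R}(P(\tau))\le0$ (resp.\ $\mu_{\infty,\tilde R}(P(\tau))\le0$) and eliminating the $(d{+}1)$-st coordinate by substitution produces immediately a positive vector $r$ with $r^\top(\mathfrak{A}(\varkappa)+cI)\le\vectorzeros[d]^\top$ (resp.\ $s>\vectorzeros[d]$ with $(\mathfrak{A}(\varkappa)+cI)s\le\vectorzeros[d]$), whence $\alpha(\mathfrak{A}(\varkappa))\le -c$. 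No AM--GM is needed in either direction: the cross term $\varkappa|B|\,|C|$ appears by pure algebra once the last weight is fixed so that the $(d{+}1)$-st row/column sum vanishes, so your references to AM--GM are a red herring.

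The genuine gap is your handling of intermediate $p$ in statement~\ref{l:m:1}. You assert that ``$\mu_p$ lies between $\mu_1$ and $\mu_\infty$ of the same matrix under a suitable common diagonal scaling.'' The Riesz--Thorin bound $\mu_p(M)\le\max\{\mu_1(M),\mu_\infty(M)\}$ does hold for a \emph{fixed} $M$, but a Hurwitz Metzler matrix need not admit a single diagonal $D$ with both $\mu_1(D^{-1}MD)<0$ and $\mu_\infty(D^{-1}MD)<0$. Concretely, with $d=1$, $A=-3$, $B=1$, $C=4$, $\varkappa=1/2$, $c=1/2$, $\tau=1$ one has
\[
\metzler{P(\tau)}=\begin{pmatrix}-5/2&1\\4&-2\end{pmatrix},
\]
which is Hurwitz Metzler, yet the ratios $d_2/d_1$ giving negative row sums form the interval $(2,5/2)$ while those giving negative column sums form $(8/5,2)$; these are disjoint. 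Hence your common-scaling interpolation fails, and the fallback ``$\operatorname{diag}(\eta)^{1/p}$-type scaling'' is unsubstantiated. The paper sidesteps this entirely: it first shows, via a Schur-complement lemma for Metzler Hurwitz matrices, that $\metzler{P(\tau)}$ is Hurwitz whenever $\mathfrak{A}(\varkappa)+cI$ is; it then invokes an external result stating that for every Hurwitz Metzler matrix and \emph{every} $p\in[1,\infty]$ there exists a (generally $p$-dependent) diagonal weight with negative $\mu_p$; finally it transfers to $P(\tau)$ via the monotonicity inequality $\mu_{p,\tilde R}(P(\tau))\le\mu_{p,\tilde R}(\metzler{P(\tau)})$, valid for diagonally weighted $\ell_p$ norms. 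To repair your argument you must either cite that same result or supply a genuine $p$-dependent construction; the interpolation you sketch does not work.
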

\begin{proof}[Proof of Lemma~\ref{lem.metzler}]
To prove~\ref{l:m:1}, we first apply~\cite[Lemma~2]{YE-DP-DA:17} (Schur complements for Metzler Hurwitz matrices) to
the Metzler majorant of the matrix $P(\tau)$
\[
\metzler{P(\tau)}=
\begin{bmatrix}
\metzler{A}+cI & |B|\\
\tau|C| & -\tau\varkappa^{-1}
\end{bmatrix},\quad \tau>0
\]
showing that $\metzler{P(\tau)}$ is Hurwitz if (and in fact, only if) the Metzler matrix $\mathfrak{A}(\varkappa)+cI=\metzler{A}+\varkappa |B|(\tau\varkappa^{-1})^{-1}\tau|C|+cI$ is Hurwitz, that is, $\alpha(\mathfrak{A}(\varkappa))<-c$.

Hence,~\cite[Lemma~3]{OP-MV:06} (the lemma on optimally diagonally weighted log norms for Metzler matrices) ensures the existence of a diagonal matrix $\tilde R\succ 0$ such that $\mu_{p,\tilde R}(\metzler{P(\tau)})<0$. Rescaling $\tilde R$, one mays assume without loss of generality that $\tilde R={\rm diag}(R,1)$, where $R\succ 0$.
Noticing that the norm $\|\cdot\|_{p,\tilde R}$ is \emph{monotone}, that is, for each two vectors $x,y\in\real^{d+1}$ such that $|x_i|\leq|y_i|\,\forall i$, one has $\|x\|_{p,\tilde R}\leq\|y\|_{p,\tilde R}$ and applying~\cite[Theorem~2.23]{FB:22-CTDS}, one proves that~\eqref{eq.lurie-sufficient-R}:
\[
\mu_p\begin{pmatrix}
RAR^{-1}+cI_d & RB\\
\tau CR^{-1} & -\tau\varkappa^{-1}
\end{pmatrix}
=\mu_{p,\tilde R}(P(\tau))\overset{\text{monotonicity}}{\leq} \mu_{p,\tilde R}(\metzler{P(\tau)})<0.
\]
Finally, if $p=\infty$, one can also notice that
\[
-\tau\varkappa^{-1}+\tau\|CR^{-1}\|_{\infty}\leq -\tau\varkappa^{-1}+\tau\|CR^{-1}\|_1\overset{(*)}{\leq} \mu_{\infty}\begin{pmatrix}
RAR^{-1}+cI_d & RB\\
\tau CR^{-1} & -\tau\varkappa^{-1}
\end{pmatrix}<0.
\]
Here the inequality $(*)$ holds due to the representation of $\mu_{\infty}$ (Table~\ref{table:equivalences}).

To prove~\ref{l:m:2}, notice that $\alpha(\metzler{P(\tau)})\leq\mu_{p,\tilde R}(\metzler{P(\tau)})=\mu_{p,\tilde R}(P(\tau))\leq 0$ when $p\in\{1,\infty\}$ and $\tilde R\succ 0$ is a diagonal matrix.
Hence, for each $\varepsilon>0$ the matrix
\[
\metzler{P(\tau)}-(\tau\varkappa^{-1}\varepsilon) I_{d+1}
=
\begin{bmatrix}
\metzler{A}+(c-\tau\varkappa^{-1}\varepsilon) I_{d} & |B|\\
\tau |C| & -\tau\varkappa^{-1}(1+\varepsilon)
\end{bmatrix}
\]
is Hurwitz. Invoking~\cite[Lemma~2]{YE-DP-DA:17}, one proves that
$A+(c-\tau\varkappa^{-1}\varepsilon) I_{d}+\varkappa(1+\varepsilon)^{-1}|B|\,|C|=\mathfrak{A}(\varkappa)+cI_d+O(\varepsilon)$ is a Hurwitz matrix for an arbitrary small $\varepsilon>0$. Statement~\ref{l:m:2} is now proved by taking the limit as $\varepsilon\to 0$.
\end{proof}
\begin{remark}\label{rem.positive-necess-suff}
  Recall that, in view of Corollary~\ref{cor.lurie2}, for the case where
  $A$ is Metzler and $B,C$ are nonnegative, the Lyapunov
  condition~\eqref{eq.lyap} holds for the norm $\|\cdot\|_{1,R}$, where
  $R\succ 0$ is diagonal matrix, \emph{if and only
    if}~\eqref{eq.lurie-sufficient-R} is valid. The condition
  $\alpha(\mathfrak{A}(\varkappa))<0$ is hence the \emph{necessary and
    sufficient} condition for the absolute stability~\eqref{eq.abs-stab}
  (with sector constraint) and absolute contractivity~\eqref{eq.abs-contr}
  (with slope constraint) with respect to diagonally weighted $\ell_1$-norms.
  \end{remark}

\subsection{A discussion on Aizerman and Kalman conjectures}

It is interesting to compare the results of Lemma~\ref{lem.p2} and
Lemma~\ref{lem.metzler} with the two famous conjectures that were
formulated at the dawn of nonlinear control theory. Aizerman~\cite{MAA:49}
conjectured that for the global stability of the equilibrium $x=0$ in the
Lur'e system~\eqref{eq.system},\eqref{eq.nonlin} with an arbitrary
continuous nonlinearity in sector~\eqref{eq.sector0} it suffices to prove
stability with all linear feedback functions $\varphi(y)=ky$, where
$k\in[\zeta,\varkappa]$, that is,
\begin{equation}\label{eq.aizerman}
  \alpha(A+kBC)<0\quad\forall k\in[\zeta,\varkappa].
\end{equation}
(all matrices $A+kBC$ are Hurwitz). Notice that~\eqref{eq.aizerman} can be efficiently tested, e.g., via the Nyquist criterion. Later, Kalman~\cite{REK:57} conjectured that~\eqref{eq.aizerman} guarantees global stability of~\eqref{eq.system},\eqref{eq.nonlin} with every differentiable nonlinearity such that $\zeta\leq\varphi'(y)\leq\varkappa$.

In general, neither Aizerman's nor even Kalman's conjecture proves to be valid when the dimension of the state vector is $d\geq 3$; the reader is referred to~\cite{GAL-NVK:13,RD-SD:18} for the survey of main historical milestones and recent achievements in this area. However, these conjectures may be valid for special triples of matrices $(A,B,C)$.

The criterion from Lemma~\ref{lem.p2} (partly available in~\cite{RD-SD:18}) shows that the Aizerman conjecture is valid in the situation where $A$ is symmetric and $B,C^{\top}$ are parallel (in this situation, $BC$ is also a symmetric matrix). Note that formally we have considered only the sector with $\zeta=0$, however, the transformation introduced in Subsection~\ref{subsec.constraints} allows to discard this assumption. Lemma~\ref{lem.metzler} shows that the Aizerman conjecture is valid for the case where $A$ is Metzler and $B,C$ are nonnegative and the sector's lower slope is $\zeta=0$. Notice that this fact is typically proved by using the diagonally weighted $\ell_2$ norm as a Lyapunov function~\cite{MYC:10}; Lemma~\ref{lem.metzler} shows that, in fact, one can use diagonally weighted $\ell_p$ norm with an arbitrary choice of $p\in[1,\infty]$. In both situation, the stronger version of Kalman's conjecture proves to be valid: if the nonlinearity is slope-restricted in the sense that
$0\leq\varphi'(y)\leq\varkappa$, then the Lur'e system~\eqref{eq.system},\eqref{eq.nonlin} enjoys the exponential global contractivity property.

\subsection{Generalization to MIMO nonlinear blocks}

To keep the presentation in this paper simple, we have confined ourselves
to the classical Lur'e system with a scalar nonlinear block.  However, the
general form of Theorem~\ref{lemma:duality} with $s>1$ constraints allows us
to extend the results of these paper to many kinds of multidimensional
nonlinearities, e.g., \emph{diagonal} nonlinearities $w(t)=\Phi(y(t))$
where $\dim y=\dim w=p$ and $\Phi(y)$ is a diagonal matrix whose $i$th
diagonal entry $\Phi(y)_{ii}=\varphi_i(y_i)$ depends only on
$y_i$. Assuming that all scalar functions $\varphi_i(y_i)$ obey the sector
or slope constraint~\eqref{eq.sector+} or~\eqref{eq.slope+} (where $w,y$
have to be replaced by $w_i,y_i$), Theorem~\ref{thm.th-lp} retains its
validity with the only difference that in~\eqref{eq.lurie-sufficient}
$\tau$ is not a scalar but a diagonal matrix $\tau={\rm
  diag}(\tau_1,\ldots,\tau_p)$; the corollaries of Theorem~\ref{thm.th-lp}
thus also can be generalized to the case of Lur'e-type systems with MIMO
nonlinearities. Such systems arise in a broad range of applications, e.g.,
in dynamical models of neural circuits~\cite{EK-AB:00,AD-AVP-FB:21k}.

\section{Conclusions and future works}\label{sec.concl}

One of keystones of modern nonlinear control theory, the S-Lemma serves a
convenient tool for Lyapunov stability and contractivity analysis of
nonlinear systems. This lemma enables one to transform quadratic
constraints on nonlinearities (e.g., sector or slope constraints) into the
Lyapunov condition $\dot V\leq -cV$, where $V$ is a positive definite
quadratic form of the state vector (or, in contraction analysis, state
increment) and $c\geq 0$ is the convergence or contraction rate. Quadratic
Lyapunov functions allow us to estimate the convergence and contraction
rate in some \emph{Euclidean} norm (that is, a norm induced by an inner
product). To obtain such estimates in \emph{non-Euclidean} norms, e.g., the
$\ell_p$ norms, we generalize the classical S-Lemma to non-quadratic
functions that are induced by the \emph{weak pairing} associated with the
norm. Using this generalized S-Lemma, we derive novel criteria for absolute
stability and contractivity that are based on $\ell_p$ norms (possibly,
weighted) and give alternative proofs of some recent results, in
particular, symmetrization-based stability criteria from~\cite{RD-SD:18}
and the Aizerman conjecture for positive Lur'e systems~\cite{MYC:10}.


A topic of ongoing research is to tighten the criteria of absolute stability
and contractivity by accounting additional properties of nonlinearities,
for instance, their boundedness as in~\cite{MF-MM-GJP:20}; {we believe that the relevant extension will be helpful in $\ell_1/\ell_{\infty}$ robustness, reachability and safety analysis of neural networks.}
Another direction of ongoing research is to obtain efficient numerical algorithms for validation of
the conditions~\eqref{eq.lurie-sufficient} for $p\ne 1,2,\infty$. Although the exact computation of the log norm is troublesome,
some upper estimates can potentially be employed.

\bibliographystyle{siamplain}
\bibliography{alias,Main,FB,New}
\appendix\section{Proof of Proposition~\ref{prop.M-l1}}

  Without loss of generality, we may assume that $M$ is non-negative
  (otherwise, replace $M$ by the matrix $M+\alpha I$ with $\alpha$ chosen
  large enough; obviously, this operation increases all three parts
  of~\eqref{eq.conic-metzler} by $\alpha$).  The first equality
  in~\eqref{eq.conic-metzler} is straightforward from Table~\ref{table:equivalences}: $|M_{ij}|=M_{ij}^+\geq 0\,\forall i\ne j$.

  Considering the open simplex $\Delta=\setdef{x>0}{\norm{x}{1}=1}$ and its
  closure $\bar\Delta=\setdef{x\geq 0}{\norm{x}{1}=1}$, we know (Table~\ref{table:equivalences}) that
  $\mu_{1}^+(M)=\sup_{x\in\bar\Delta}\WP{Mx}{x}\geq
  \sup_{x\in\Delta}\WP{Mx}{x}$. To prove the inverse inequality, notice
  that for $M$ being non-negative, the function $\WP{Mx}{x}$ is
  \emph{concave} on $\bar\Delta$. Indeed, denoting $I(x)=\{i:x_i>0\}$, one
  has
  \[
  \WP{Mx}{x}_1=\sum_{i\in I(x)}(Mx)_i.
  \]
  Given two vectors $x^0,x^1\in\Delta$, $\theta\in (0,1)$ and denoting
  $x^{\theta}=\theta x^1+(1-\theta)x^0$, one has $I(x^{\theta})=I(x^0)\cup I(x^1)$, $Mx^{\theta}\geq\theta (Mx^1)\geq
  0$ and $Mx^{\theta}\geq(1-\theta) Mx^0\geq 0$.  Thus
  \[
  f(\theta):=
  \sum_{i=1}^n(Mx^{\theta})_i\geq (1-\theta)\sum_{i\in I(x^0)}(Mx^0)_i+\theta\sum_{i\in I(x^1)}(Mx^1)_i=(1-\theta)f(0)+\theta f(1).
  \]
  In particular, $\liminf_{\theta\to 0^+}f(\theta)\geq f(0)$. Choosing
  $x^0$ and $x^1$ in such a way that $x^{\theta}>0$ for each $\theta\in
  (0,1)$ (that is, $I(x^0)\cup I(x^1)=\{1,\ldots,n\}$), one proves that
  \[
  \WP{Mx^0}{x^0}_1\leq\liminf_{\theta\to 0^+}\WP{Mx^{\theta}}{x^{\theta}}\leq \sup_{x\in\Delta}\WP{Mx}{x}_1.
  \]
  Since, $x^0\in\bar\Delta$ can be arbitrary, we have
  $\mu_{1}^+(M)=\sup_{x\in\bar\Delta}\WP{Mx}{x}\leq
  \sup_{x\in\Delta}\WP{Mx}{x}$, which proves the second equality
  in~\eqref{eq.conic-metzler}.\qed

\end{document}